\documentclass[10pt]{amsart}
\usepackage{hyperref}
\usepackage{amsmath,amsfonts,mathrsfs,amssymb,latexsym,array,mathtools}
\usepackage[all]{xy}
\newcommand{\N}{\mathcal{S}} 
\newcommand{\CM}{{\overline{\mathcal{M}_{g,1}^{\N}}}} 
\newcommand{\M}{{\mathcal{M}_{g,1}^{\N}}} 
\newcommand{\C}{\mathcal{C}} 
\renewcommand{\k}{\mathbf{k}} 
\newcommand{\proj}{\mathbb{P}} 
\renewcommand{\l}{\ell} 


\DeclareMathOperator*{\codim}{codim} 
\DeclareMathOperator*{\GL}{GL}

\newtheorem{thm}{Theorem}[section]
\newtheorem{lem}[thm]{Lemma}

\newtheorem{prop}[thm]{Proposition}
\newtheorem{rmk}[thm]{Remark}
\newtheorem{syzlem}[thm]{Syzygy Lemma}
\def\c[#1,#2]{{c_{{#1},#2}}}


\subjclass[2010]{14H10, 14H20, 14H51 \and 14H55}

\begin{document}

\title{On the locus of curves with an odd subcanonical marked point}

\author{Andr\'{e} Contiero}
\address{Departamento de Matem\'atica, ICEx, UFMG. Av. Ant\^onio Carlos 6627, 30123-970 Belo Horizonte MG, Brazil}
\email{contiero@ufmg.br\footnote{Partially supported by \emph{Universal CNPq 486468/2013-5 } and by \emph{Programa Institucional de Aux\'ilio \`a Pesquisa de Docentes Rec\'em-Contratados - UFMG}}
}

\author{Aislan Leal Fontes}
\address{Departamento de Matem\'atica, UFS - Campus Itabaiana. Av. Vereador Ol\'impio Grande s/n, 49506-036 Itabaiana/SE, Brazil}
\email{aislan@ufs.br}


\begin{abstract}

We present an explicit construction of a compactification of the locus 
of smooth curves whose symmetric Weierstrass semigroup 
at a marked point is odd. The construction is an extension of Stoehr's techniques using Pinkham's equivariant deformation of 
monomial curves by exploring syzygies. As an application we prove the rationality of the locus for genus at most six.

\end{abstract}

\maketitle

\section{Introduction}


Let $\mathscr{H}_{2g-2}$ be the locus of compact Riemann surfaces (smooth projective algebraic curves) of genus $g>1$ with a 
fixed abelian differential vanishing at a point to order $2g-2$. In a remarkable work Kontsevich--Zorich \cite[Thm. 1]{KZ03} 
showed that $\mathscr{H}_{2g-2}$ has exactly three irreducible components, namely the locus
$\mathscr{H}_{2g-2}^{\mathrm{hyp}}$ of hyperelliptic points, the even $\mathscr{H}_{2g-2}^{\mathrm{even}}$ and the odd 
$\mathscr{H}_{2g-2}^{\mathrm{odd}}$ points. Ten years later Bullock \cite[Thm. 2.1]{Bu13} characterized a general point of each component,
a general point of $\mathscr{H}_{2g-2}^{\mathrm{hyp}}$ has Weierstrass gaps $\{1,3,5,\ldots,2g-3,2g-1\}$, 
a general point of $\mathscr{H}_{2g-2}^{\mathrm{odd}}$ has Weierstrass semigroup $\{1,2,3,\ldots,g-1,2g-1\}$ and finally
a general point of $\mathscr{H}_{2g-2}^{\mathrm{even}}$ has Weierstrass gaps $\{1,2,3,\ldots,g-2,g,2g-1\}$. 
Say that an abelian di\-ffe\-rential with a zero at a point of order $2g-2$ it is equivalent to required that this point is \textit{subcanonical}, 
\cite[Def. 1]{Bu13}, i.e. the associated Weierstrass semigroup of this point is \textit{symmetric}.

Let $\M$ be the moduli space of smooth pointed curves of genus $g>1$ with a fixed symmetric Weierstrass
semigroup $\N$ at the marked point. There are two powerful tools to investigate the moduli spaces $\M$, both based on deformation of suitable curves.
On the one hand Eisenbud--Harris \cite{EH87}
deformed stable curves and uses limit linear series
to study properties of $\M$ as a locally closed subset of $\mathcal{M}_{g,1}$. 
On the other hand, Pinkham \cite{Pi74} studied the moduli $\M$ by using equivariant deformation theory, 
deforming \textit{monomial curves}. Following a proposal given by Mumford \cite{M75} on 
Petri's analysis of the canonical ideal, Stoehr \cite{St93} constructed a compactification of $\M$ when $\N$ is symmetric by allowing 
Gorenstein curves at its bordering. 
Stoehr's techniques avoid suitable classes of symmetric semigroups, more precisely, it is assumed that the multiplicity  $n_1$ of $\N$ satisfies 
$3<n_1<g$, and that $\N\neq \langle 4,5\rangle$, avoiding the general points of $\mathscr{H}_{2g-2}^{\mathrm{hyp}}$ and of 
$\mathscr{H}_{2g-2}^{\mathrm{odd}}$  of the Kontsevich--Zorich space $\mathscr{H}_{2g-2}$. Another successful approach to study families of 
Weierstrass points can be done by considering (generalized) Wronskians and its derivatives, we refer to \cite{Ga,GP99}.

In this paper we extend Stoehr's techniques in order to construct in a rather explicit way a compactification $\overline{\M}$ of the moduli
space $\M$ when $\N$ is a symmetric semigroup different from the hyperelliptic $\langle 2,2g+1\rangle$. Numerical semigroups of odd type 
tends to be realized as Weierstrass semigroups of possibly singular Gorenstein curves which are a triple recovering of the 
projective line $\mathbb{P}^1$, ie $3$-gonal singular curves, see Lemma \ref{lem31} below. Hence the canonical 
ideal of the monomial Gorenstein curve associated to a numerical odd semigroup cannot be generated by only quadratic forms as required, 
cf. Lemma \ref{lemaI0} of the present work.

Given a nonhyperelliptic symmetric semigroup $\N\neq\langle 2,2g+1\rangle$, we deform the ideal (which is generate by quadratic and 
cubic forms) of the associated canonically embedded monomial Gorenstein curve. By analyzing suitable syzygies of canonical ideals, see Lemma \ref{lem2}, 
we get a compactification of $\M$ by allowing Gorenstein singularities at its bordering, cf. Theorem \ref{teo3}. 
The compactification is (by construction)
a closed subset of the weighted projective space $\mathbb{P}(\mathrm{T}^{1,-}(\k[\N]))$, where $\mathrm{T}^{1,-}(\k[\N])$ stands for the
negatively graded part of the first module of cotangent complex associated to a suitable monomial curve. Since our construction is completely
explicit, we are able to produce non-trivial examples and investigate the global geometry of the moduli spaces $\M$. In the last 
section of this paper we illustrate our techniques computing the equations of $\overline{\M}$ when $\N$ is odd of genus $5$,
$\N=\langle 5,6,7,8\rangle$ and of genus $6$, $\N=\langle 6,7,8,9,10\rangle$.

\section{Gorenstein subcanonical curves and Weierstrass Points}

Let $\C$ be a complete integral Gorenstein curve of arithmetical genus $g>1$ defined over an algebraically field $\k$.
Throughout this section we assume that $\C$ is subcanonical, i.e.  there is a rational function on $\C$ 
with pole divisor $(2g-2)P$,  where $P$ is a nonsingular point of $\C$. The dualizing sheaf $\omega$ of $\C$ is $\mathcal{O}_{\C}((2g-2)P)$, 
and the vector space of its global sections is
$$H^0(\C, \omega)=\k\cdot x_{n_0}\oplus \k\cdot x_{n_1}\oplus\dots\oplus\k\cdot x_{n_{g-1}}$$
where $x_{n_i}$ is a rational function on $\C$ whose pole divisor is $n_iP$, for $i\geq 1$, with $n_0:=0$ and $n_{g-1}=2g-2$.
Equivalently, the base point $P\in \C$ is a Weierstrass point with gap sequence $1=\l_1<\l_2<\dots<\l_g=2g-1$, whose
symmetric Weierstrass semigroup $\N$ of genus $g$ is canonically
generated by its first $g$ non-gaps, $\langle n_0,n_1,\dots, n_{g-1}\rangle=\N$. 
We recall that a semigroup $\N$ of genus $g$ is symmetric if its Frobenius number $\l_g$ 
is the largest possible, namely $\l_{g}=2g-1$. Equivalently, $\N$ is symmetric if and only if $\l_i=\l_g-n_{g-i}$, for all $i=1,\dots, g$.

Let us assume that $\C$ is also non-hyperelliptic, thus its dualizing sheaf $\omega$ 
induces an embedding in the $(g-1)$-dimensional projective space $\mathbb{P}^{g-1}$ defined over $\k$,
$$(x_{n_o}:\dots:x_{n_{g-1}}):\C\xhookrightarrow { \ \ \omega \ \ }\mathbb{P}^{g-1}=\mathbb{P}(H^0(\C, \omega))\,.$$
 Therefore, $\C$ can be identified with its image under the canonical embedding. Hence
$\C\subset\mathbb{P}^{g-1}$ is a projective curve of genus $g$ and degree $2g-2$.

Reciprocally, every nonhyperelliptic symmetric numerical semigroup $\N$ of genus $g>1$ can be rea\-lized as a Weierstrass semigroup of a canonical Gorenstein curve. 
We just consider the canonical generators $0=n_0<n_1,\dots,<n_{g-1}=2g-2$ of $\N$ and take the induced (canonical) monomial curve
$$\C^{(0)}:=\{(s^{n_0}t^{\l_g-1}: s^{n_1}t^{\l_{g-1}-1}:\ldots:s^{n_{g-2}}t^{\l_2-1}: s^{n_{g-1}}t^{\l_1-1})\,\vert\,(s:t)\in\mathbb{P}^1\}\subset \mathbb{P}^{g-1}\,.$$
It can be checked that it has a unique singular point, namely $(1:0:\dots:0)$, which is unibranch and has singularity degree $g$.
Since the semigroup $\N$ is symmetric, $C^{(0)}$ is a Gorenstein curve. The contact orders with
hyperplanes at its unique point $P=(0:\dots0:1)$ at the infinity are exactly $\l_i-1$, $i=1,\dots,g$ (the vanishing sequence). Thus $\C^{(0)}$ has degree $2g-2$ and its 
Weierstrass semigroup at $P$ is $\N$. 

According to Enriques--Babbage's theorem for smooth curves, cf. \cite{ACGH85},  
if we assume that $\C$ is not isomorphic to a plane quintic, then its ideal can be generated by quadratic forms, 
when it is non-trigonal, and by quadratic and cubic forms when it is trigonal. 

An extended version of Max Noether's theorem for complete integral 
non-hy\-pereliptic curves, 
cf. \cite{CFM, Ma},
states that there is a surjective homomorphism $$\mathrm{Sym}^r(H^0(\C, \omega))\longrightarrow H^0(\C, \omega^{r})$$ for all $r\geq 1$.
In the following, we recall a suitable proof of Max-Noether's theorem for subcanonical curves given by St\"ohr in \cite{St93}.

Let $\C$ be a complete non-hyperelliptic Gorenstein curve with a subcanonical point $P$. Since $\C$ is non-hyperelliptic, we must to assume that the symmetric
semigroup $\N$ is not hyperelliptic, i.e.  $2\notin\N$, equivalently $\N\neq\langle2,2g+1\rangle$.
Now, for each nongap $s\leq 4g-4$, we consider the partitions of $s$ as sums of two nongaps,
$$s=a_{s}+b_{s},\ a_{s}\leq b_{s}\leq 2g-2,$$
with $a_s$ the smallest possible nongap. From Oliveira's paper \cite[Thm. 1.3]{O91} the following $3g-3$ rational 
functions $x_{a_s}x_{b_s}$ of $\C$ form a $P$-hermitian basis for the space
of the global sections of the bicanonical divisor $\omega^{2}=\mathcal{O}_{\C}((4g-4)P)$. 
Now, for each integer $r\geq 3$ a $P$-hermitian basis for the space $H^{0}(\C, \omega^{r})$ is given by the $r$-monomials expressions 
\begin{equation*}
 \begin{array}{lr}
 x_{n_{0}}^{r-1}x_{n_i} & (i=0,\ldots,g-1),\\
 x_{n_{0}}^{r-2-i}x_{a_s}x_{b_s}x_{n_{g-1}}^{i} & (i=0,\ldots, r-2,\ s=2g,\ldots,4g-4),\\
 x_{n_{0}}^{r-3-i}x_{n_1}x_{2g-n_1}x_{n_{g-2}}x_{n_{g-1}}^{i} & (i=0,\ldots, r-3).
 \end{array} 
 \end{equation*}
 
\noindent Note that the pole orders of the above $(2r-1)(g-1)$ rational functions are pairwise different, so they form a linearly independent set in
$H^0(\C, \omega^r)$.

  
 Let $I(\C)=\displaystyle\oplus_{r=2}^{\infty}I_r(\C)$ be the homogeneous canonical ideal of $\C\subset\mathbb{P}^{g-1}$. As an 
 immediate consequence of the existence of the above 
 $P$-hermitian basis of $r$-monomials for $H^{0}(\C, \omega^r)$, the homomorphism 
  \begin{equation*}
  {\mathbf{k}[X_{n_0},\ldots,X_{n_{g-1}}]}_r\longrightarrow H^{0}(\C, \omega^r)
  \end{equation*}
 induced by the substitutions $X_{n_{i}}\longmapsto x_{n_i}$ is surjective for each $r\geq 1$. Thus we get a proof of Max-Noether's 
 theorem for non-hyperelliptic Gorenstein curves with a subcanonical point.
 
 By virtue of Riemann's theorem, for each $r\geq 2$, the codimension of $I_r({\C})$ in the ${r+g-1\choose r}$-dimensional vector 
 space ${\mathbf{k}[X_{n_0},\ldots,X_{n_{g-1}}]}_r$ of homogeneous $r$-forms is equal to $(2r-1)(g-1)$.
So the vector space of quadratic and cubic relations have dimensions 
  \begin{equation*}
  \dim I_2(\C)=\frac{(g-2)(g-3)}{2} \ \ \mbox{and}\ \dim I_3(\C)={g+2\choose 3}-(5g-5),
  \end{equation*}
  respectively. 
  
 For each $r\geq 2$, we define the vector subspace $\Lambda_r$ of ${\mathbf{k}[X_{n_0},\ldots,X_{n_{g-1}}]}_r$ 
spanned by the lifting of the above $P$-hermitian $r$-monomial basis of $H^{0}(\C, \omega^r)$. It is spanned by the $r$-monomials in $X_{n_0},\ldots,X_{n_{g-1}}$ 
  whose weights are pairwise different between all the nongaps $n\leq r(2g-2)$. Since $\Lambda_r\cap I_r(\C)=0$ and
  \begin{equation*}
  \dim\Lambda_r=\dim H^{0}(\C, \omega^r)=\codim I_r(\C),
  \end{equation*}
  we obtain
  \begin{equation}\label{directsum}
  {\mathbf{k}[X_{n_0},\ldots,X_{n_{g-1}}]}_r=I_r(\C)\oplus\Lambda_r, \ \mbox{for each}\ r\geq 2.
  \end{equation}
 Let $r\N$ be the set of all sums of $r$ nongaps not bigger than $2g-2$. Oliveira showed, cf. \cite[theorem 1.5]{O91}, that each nongap smaller than or 
  equal to $r(2g-2)$ belongs to $r\N$. Moreover, each sum of $r$ nongaps $\leq 2g-2$ is a nongap $\leq r(2g-2)$. 
  Consequently, $\#r\N=(2r-1)(g-1)$
and therefore $$\#r\N=\dim H^{0}(\C, \omega^r)\,.$$ In particular, for each nongap $s\leq 4g-4$ we list all the partitions $s=a_{si}+b_{si}\in 2\N$, where 
  \begin{align*}
  a_{si}\leq b_{si}\leq 2g-2\ (i=0,\ldots,\nu_s)\ \mbox{and}\ a_s:=a_{s0}<a_{s1}<a_{s2}<\ldots<a_{s\nu_s}\,.
  \end{align*}
  Since $x_{a_{si}}x_{b_{si}}\in H^{0}(\C, sP)$ and $\{x_{a_{s}}x_{b_{s}}\}$ is the above fixed basis, we can write 
  \begin{equation*}
 x_{a_{si}}x_{b_{si}}=\displaystyle\sum_{n=0}^{s} c_{sin}x_{a_{n}}x_{b_{n}},
  \end{equation*} for each $i=0,\dots,\nu_s$, where the coefficients $c_{sir}$ are uniquely determined constants and the summation 
  index only varies through nongaps. In the same way, 
  for each nongap $\sigma\leq 6g-6$ we consider the partitions $\sigma=a_{\sigma j}+b_{\sigma j}+c_{\sigma j}\in 3\N$ where 
  $a_{\sigma j}\leq b_{\sigma j}\leq c_{\sigma j}\leq 2g-2\ (j=0,\ldots,\nu_\sigma)$ with 
$a_{\sigma}:=a_{\sigma0}<a_{\sigma 1}<\ldots<a_{\sigma\nu_\sigma}$ and $b_{\sigma}:=b_{\sigma 0}>b_{\sigma 1}>\ldots>b_{\sigma\nu_\sigma}$.
Analogously, we can write
   \begin{equation*}
  x_{a_{\sigma j}}x_{b_{\sigma j}}x_{c_{\sigma j}}=\displaystyle\sum_{n=0}^{\sigma} d_{\sigma jn}x_{a_{n}}x_{b_{n}x_{c_{n}}},
   \end{equation*} for each integer $j=0,\ldots,\nu_{\sigma}$, where the coefficients $d_{\sigma jn}$ are uniquely determined constants and the 
   summation index only varies through nongaps.
   
 Multiplying the functions $x_{n_0},\ldots, x_{n_{g-1}}$ by constants we do not change the $P$-hermitian property of the above basis, thus
 we can normalize the coefficients $c_{sis}=1$ and $d_{\sigma j\sigma}=1$.
 Therefore, the $\binom{g+1}{2}-(3g-3)=\frac{1}{2}(g-3)(g-2)$ quadratic forms
  \begin{equation}\label{qdforms}
  F_{si}=X_{a_{si}}X_{b_{si}}-X_{a_s}X_{b_s}-\displaystyle\sum_{n=0}^{s-1}c_{sin}X_{a_{n}}X_{b_{n}}
   \end{equation} 
   and the $\binom{g+2}{3}-(5g-5)$ cubic forms
   \begin{equation}\label{cforms}
    G_{\sigma j}=X_{a_{\sigma j}}X_{b_{\sigma j}}X_{c_{\sigma j}}-X_{a_{\sigma }}X_{b_{\sigma }}X_{c_{\sigma }}-\displaystyle\sum_{n=0}^{\sigma-1}d_{\sigma jn}X_{a_{n}}X_{b_{n}}X_{c_{n}},
     \end{equation}
vanish identically on $\C$. We attach to the variable $X_n$ the weight $n$, to the coefficient $c_{sin}$ the weight $s-n$ and to
$d_{\sigma jn}$ the weight $\sigma-n$. Thus the above quadric and cubic forms are also \textit{isobaric} forms.

In the view of Henriques--Babbage's theorem we want to assure that the canonical ideal of $\C$ is
generated by the above quadratic and cubic forms. We assume that the non-hyperelliptic symmetric semigroup $\N$ is a non-trivial semigroup of genus $g>1$, which is
equivalent to assume that the multiplicity $n_1$ of $\N$ satisfies $2<n_1\leq g$.
By a theorem of Oliveira \cite[Thm. 1.7]{O91}, if we consider $3<n_1<g$, then there 
is at least one quadratic form, i.e.  $\nu_s\geq 1$, whenever $s=n_i+2g-2$ for $i=0,\ldots, g-3$. 
In this case Contiero--Stoehr \cite{CS} gave an algorithmic proof that the canonical ideal of a
Gorenstein curve $\C\subset\mathbb{P}^{g-1}$ with Weierstrass semigroup $\N$ at the base point is generated by only quadratic relations. If we assume
that $3\in\N$ then its genus has residue $1$ or $0$ module $3$, hence $\N:=\langle 3,g+1\rangle$. In this case
we already know that $\CM = \proj(T^{1,-}_{\k[\N]|\k})$. 
If $\N=\langle 4,5\rangle$ , then $\C$ is isomorphic to a plane quintic where the quadric hypersurfaces containing $\C$ is the Veronese surface.

In the excluded case $S=\mathbb{N}\backslash\{1,2,\ldots,g-1, 2g-1\}$, the curve $\C$ is possibly trigonal, so its canonical ideal can not be generated by
only quadratic relations. In the next section we investigate the Weierstrass semigroup of trigonal complete curves and then, we will give an algorithmic proof that the
canonical ideal of a complete Gorenstein curve with symmetric Weierstrass semigroup 
$$S:=\mathbb{N}\backslash\{1,2,\ldots,g-1, 2g-1\}=\langle 0,g,g+1,\dots,2g-2\rangle$$ 
at a smooth non-ramified point is generated by quadric and cubics forms.

\section{Curves with odd subcanonical points} 


Let $\C$ be a complete integral curve of arithmetic genus $g$ defined over an algebraically closed field $\k$.
A \emph{linear system of dimension $r$ on $\C$} is a set of the form
$$
\mathscr{L} =\mathscr{L}(\mathscr{F} ,V):=\{x^{-1}\mathscr{F}\ |\ x\in V\setminus 0\}
$$
where $\mathscr{F}$ is a coherent fractional ideal sheaf on $C$ and $V$ is a vector subspace of 
$H^{0}(\C, \mathscr{F})$ of dimension $r+1$. 

The notion of linear systems on curves presented here
is characterized by interchanging bundles by torsion free sheaves of rank $1$. 
This is a meaningful approach since they may possess \emph{non-removable} base points, see  Coppens \cite{Cp}.

 The \emph{degree} of the linear system $\mathscr{L}$ is the integer
$\deg \mathscr{F} :=\chi (\mathscr{F} )-\chi (\mathcal{O}_{\C})$, where $\chi$ denotes the Euler characteristic. Note, in particular, that if $\mathcal{O_{\C}}\subset\mathscr{F}$ then
$$\deg\mathscr{F}=\sum_{P\in C}\dim(\mathscr{F}_P/\mathcal{O}_{\C, P}).$$
The notation $g_{d}^{r}$ stands for a linear system of degree $d$ and dimension $r$. 
The linear system is said to be \emph{complete} if $V=H^0(\C, \mathscr{F})$, in this case one simply writes $\mathscr{L}=|\mathscr{F}|$.
According to E. Ballico's \cite[p. 363, Dfn. 2.1 (3)]{Bal}, the gonality of $\C$ is the smallest $d$ for which there exists a 
$g_{d}^{1}$ on $\C$, or equivalently, a torsion free sheaf $\mathscr{F}$ of rank $1$ on $\C$ with degree $d$ and 
$h^0(\C, \mathscr{F})\geq 2$. %



The following lemma is a straightforward generalization of a Kim's result \cite[Thm. 2.6]{Kim} characterizing the Weierstrass semigroup
associated to a non-ramification point of a trigonal curve.

\begin{lem}\label{lem31}
Let $\C$ be a complete integral trigonal curve of arithmetical genus $g\geq 5$ and $P\in\C$ a Weierstrass non-ramification point. 
The Weierstrass semigroup $\N$ of $\C$ at $P$ is of the form
$$\{0,m,m+1,m+2,\dots,m+(s-g), s+2,s+3,s+4,\dots\},$$
for some $s$ and $m$ such that $g\geq m\geq\left\lfloor\frac{s+1}{2}\right\rfloor+1$.
In particular, in the symmetric case we get the odd semigroup
$$\N=\{0,g,g+1,\dots,2g-2,2g,2g+1,2g+2,\dots\}.$$

\end{lem}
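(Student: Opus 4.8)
The plan is to reduce everything to Kim's theorem \cite[Thm. 2.6]{Kim} by handling the two features that distinguish our setting from his: the curve $\C$ is allowed to be a singular integral Gorenstein curve rather than a smooth one, and the point $P$ is a Weierstrass non-ramification point of the trigonal pencil. First I would fix a $g^1_3$ on $\C$, i.e.\ a rank-one torsion-free sheaf $\mathscr{F}$ with $\deg\mathscr{F}=3$ and $h^0(\C,\mathscr{F})\ge 2$; since $\C$ is integral, $\mathscr{F}$ is actually a line bundle on the smooth locus, and as $P$ is a non-ramification point the fibre of the pencil through $P$ is $P+P'+P''$ with $P,P',P''$ distinct (and $P$ unramified means $P$ appears with multiplicity one). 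The key computation is then the standard one: for each $k\ge 0$ the multiples $kP$ that are non-gaps are detected by the linear series $|kP|$, and one splits the count according to how $kP$ interacts with the trigonal pencil. Concretely, writing $h^0(\C,\mathcal{O}_\C(kP))$ via Riemann--Roch for Gorenstein curves ($h^0(kP)-h^0(K-kP)=k+1-g$) and using that the trigonal pencil is the unique (by Enriques--Babbage / the gonality being $3$, $g\ge 5$) pencil of degree $3$, one shows that consecutive non-gaps differ by the same pattern Kim found: a first "block" $\{m,m+1,\dots,m+(s-g)\}$ coming from small multiples of $P$ that are not yet forced by the pencil, followed by all integers $\ge s+2$ once the pencil starts contributing a full extra dimension at each step.

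The bound $g\ge m\ge\lfloor (s+1)/2\rfloor+1$ I would get exactly as in the smooth case: $m\le g$ because the multiplicity of any Weierstrass semigroup of genus $g$ is at most $g$ (here $m=n_1$ is the multiplicity), and the lower bound $m\ge\lfloor(s+1)/2\rfloor+1$ is the Clifford-type inequality applied to the special divisor of degree $s$ supported at $P$ — the linear series $|sP|$ has dimension $s-g+$(number of non-gaps $\le s$) and Clifford forces this to be $\le s/2$, which rearranges to the stated inequality. The only place the Gorenstein (rather than smooth) hypothesis enters is in invoking Riemann--Roch and Clifford's theorem; both hold for integral Gorenstein curves with $\mathcal{O}_\C\subset\mathscr{F}$ using the degree formula $\deg\mathscr{F}=\sum_P\dim(\mathscr{F}_P/\mathcal{O}_{\C,P})$ recalled above, so Kim's arguments transcribe verbatim. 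I expect the only real subtlety, and the main obstacle, is checking that the trigonal pencil is still \emph{unique} on a singular integral Gorenstein curve of genus $g\ge 5$ (so that the combinatorics of "adding the pencil" is unambiguous); this follows from the extended Enriques--Babbage / Max-Noether discussion recalled in Section 2, since a curve with two distinct $g^1_3$'s would have its canonical model lying on a surface forcing $g\le 4$.

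For the symmetric specialization, suppose in addition $\N$ is symmetric of genus $g$, so its Frobenius number is $2g-1$ and $n_i=\ell_g-\ell_{g-i}$. Plugging the general shape $\{0,m,m+1,\dots,m+(s-g),s+2,s+3,\dots\}$ into the symmetry condition $\ell_i=\ell_g-n_{g-i}$ pins down $s$ and $m$: the largest gap is $2g-1$, and symmetry around $(2g-1)/2=g-\tfrac12$ forces the "block" of consecutive non-gaps to start exactly at $m=g$ (so the gaps are precisely $1,2,\dots,g-1$ together with $2g-1$) and forces $s=2g-2$, i.e.\ the non-gap $2g-1$ is missing but $2g,2g+1,\dots$ are all present. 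Hence
\[
\N=\{0,g,g+1,\dots,2g-2,2g,2g+1,2g+2,\dots\}=\langle g,g+1,\dots,2g-2,2g+1\rangle,
\]
which is exactly the asserted odd semigroup. This last step is a short, purely arithmetic verification using only the definition of symmetry recalled in Section 2, so I would present it as a one-line consequence once the general form is established.
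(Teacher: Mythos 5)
There is a genuine gap at the heart of your argument: the derivation of the lower bound $m\geq\left\lfloor\frac{s+1}{2}\right\rfloor+1$ via Clifford's theorem does not work. Applying Clifford to the special divisor $sP$ gives $\dim|sP|=s-g+1\leq s/2$, which rearranges to $s\leq 2g-2$ --- a bound on $s$ carrying no information about $m$; applying it instead to $|mP|$ or to the residual series only yields $m\geq 2$. Indeed no Clifford-type estimate can produce this inequality, because it is false for non-trigonal curves: a subcanonical point with semigroup $\langle 4,5\rangle$ in genus $6$ has $m=4$ and $s=2g-2=10$, violating $m\geq 6$. The bound is where trigonality and the non-ramification hypothesis actually enter. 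The paper's proof writes $\omega_{\C}=\mathcal{O}_{\C}(mP)\otimes\mathcal{O}_{\C}(D)$ with $D=(s-m)P+P_1+\cdots+P_{2g-2-s}$, notes that $|D|$ is compounded of the $g^1_3$ while $|mP|$ is not (since $m>3$), so $|D|=(g-m)g^1_3+B$ with $B$ the base locus; because $P$ is unramified, each member of the pencil through $P$ has the form $P+Q_1+Q_2$ with $Q_1,Q_2\neq P$, and the maximality of $s$ forces the $2(g-m)$ points $Q_1,Q_2$ (counted with multiplicity) into $P_1+\cdots+P_{2g-2-s}$, whence $2(g-m)\leq 2g-2-s$. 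You set up the fibre structure $P+P'+P''$ in your first paragraph but never feed it into any inequality, and your ``main obstacle'' (uniqueness of the $g^1_3$) is not where the difficulty lies.

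The other half of the lemma --- that $m,m+1,\dots,m+(s-g)$ are \emph{consecutive} nongaps --- is asserted in your proposal (``one shows that consecutive non-gaps differ by the same pattern Kim found'') rather than proved, and it is not a formal consequence of Riemann--Roch plus uniqueness of the pencil. In the paper it falls out of the same decomposition: comparing coefficients of $P$ in $(g-m)(P+Q_1+Q_2)+B=(s-m)P+P_1+\cdots+P_{2g-2-s}$ gives $B\succeq(s-g)P$, so $(s-g)P$ lies in the base locus of $|K-mP|$; hence $h^0((m+i)P)=h^0(mP)+i$ for $i=0,\dots,s-g$ and each of $m,m+1,\dots,m+(s-g)$ is a nongap. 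Your final arithmetic step (the symmetric case forcing $m=g$, $s=2g-2$) is fine, and your reduction of the Gorenstein issues to the degree formula for torsion-free sheaves is reasonable in spirit, but as written the proposal is missing the two computations that constitute the proof.
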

\begin{proof}
Let $\l_g$ be the Frobenius number of the Weierstrass semigroup $\N$ associated to $P\in \C$. Equivalently, the integer $s:=\l_g-1$ 
is the largest such that the
divisor $D_0=s\,P$ is special. Since $P$ is a Weierstrass point, it is immediate that $g\leq \l_g-1\leq 2g-2$. By the maximality of $s$
\begin{equation*}
\dim|\mathcal{O}(D_0)|=s-g+1.
\end{equation*}
Since $D_0$ is a special divisor, let be
\begin{equation*}
\omega_{\C}\simeq \mathcal{O}_{\C}(D_0+P_1+P_2+\ldots+P_{2g-2-s})
\end{equation*}
the dualizing sheaf of $\C$ where $P_i\in\C$, $P_i\neq P$, with $i=1,\ldots,2g-2-s$. As $P$ is not a ramification point, 
the first nongap $m$ is greater than $3$, and so $|mP|$ is not compounded of $g^1_3$.
By considering the divisor $$D:=(s-m)P+P_1+P_2+\ldots+P_{2g-2-s}$$ we see that $|D|$ is compounded of 
$g^1_3$ because $\omega_{\C}=\mathcal{O}_{\C}(mP)\otimes\mathcal{O}_{\C}(D)$.

 Applying the Riemann-Roch theorem, $\dim |D|=g-m$, hence we can write $|D|=(g-m)g^1_3+B$, where $B$ is the base 
 locus of $|D|$. For each element $R$ of $g^1_3$ with $R\succeq P$, we have $R=P+Q_1+Q_2$, with $P\neq Q_1$ and $P\neq Q_2$ 
 because $P$ is not a ramification point of $\C$, thus 
 \begin{equation*}
 D=(g-m)(P+Q_1+Q_2)+B=(s-m)P+P_1+P_2+\ldots+P_{2g-2-s},
 \end{equation*} 
 and by the maximality of $s$
 \begin{equation*}
 P_1+P_2+\ldots+P_{2g-2-s}\succeq (g-m)Q_1+(g-m)Q_2, 
 \end{equation*}
 implying $2(g-m)\leq 2g-2-s$. Therefore, $m\geq\left\lfloor\frac{s+1}{2}\right\rfloor+1$. 
 
 On the other hand, 
 \begin{equation*}
 B\succeq (s-g)P,
 \end{equation*}
 which means that $(s-g)P$ is contained in the base locus of $|D|$. Consequently each divisor $iP$ is not in the base 
 locus of $|mP+iP|, i=0,\ldots, s-g$, and thus $m, m+1,\ldots, m+s-g$ are nongaps of $\N$. 
 
 Now by definition of $s$ and by Riemann-Roch theorem, $\dim|(s+1)P|=\dim |sP|$, which implies that $s+1$ is a gap of $\N$. 
 Since the divisor $(r-1)P$ is nonspecial for each integer $r\geq s+2$, it follows that
\begin{equation*}
\dim |rP|=r-g=\dim|(r-1)P|+1,
\end{equation*}
so each $r\geq s+2$ is a nongap. In this way the set
\begin{equation*}
S=\left\{0, m, m+1,\ldots, m+(s-g), s+2,\ldots\right\}
\end{equation*}
is contained in $\N$ and the cardinality of $\mathbb{N}-S$ is $g$.
\end{proof}


Let us consider the \textit{odd numerical semigroup} $\N:=\langle 0,g,g+1,\dots,2g-2\rangle$ of genus $g\geq 5$.
We now fix $\frac{1}{2}(g-3)(g-2)$ initial quadratic forms like in \eqref{qdforms}
  \begin{equation*}
  F_{si}^{(0)}:=X_{a_{si}}X_{b_{si}}-X_{a_s}X_{b_s}
   \end{equation*} and $\binom{g+2}{3}-(5g-5)$ initial cubic forms like in \eqref{cforms}
   \begin{equation*}
    G_{\sigma j}^{(0)}:=X_{a_{\sigma j}}X_{b_{\sigma j}}X_{c_{\sigma j}}-X_{a_{\sigma }}X_{b_{\sigma }}X_{c_{\sigma }}.
   \end{equation*}
   
\noindent It is clear that a considerable amount of cubic forms are just multiplies of quadratic ones, the next result explicitly find
them.
 
 \begin{prop} Let $\N:=\langle 0,g,g+1,\dots,2g-2\rangle$. There are exactly $\wp={g+2\choose 3}-(5g-5)-\eta$, with
 $$\eta=(g-3)(g-2)+(g-2)\left\lfloor\frac{g-2}{2}\right\rfloor+\left\lfloor\frac{g-3}{2}\right\rfloor+\displaystyle\sum_{j=1}^{g-4}\left\lfloor\frac{g-2-j}{2}\right\rfloor$$
initial cubic forms which are not multiples of the quadratic ones.  
 \end{prop}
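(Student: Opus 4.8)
The plan is to count directly, organizing the cubic generators according to their structure relative to the quadratic relations. Recall that the odd semigroup $\N=\langle 0,g,g+1,\dots,2g-2\rangle$ has nongaps $n_0=0$ and $n_i = g-1+i$ for $i=1,\dots,g-1$, so $n_{g-1}=2g-2$. A cubic form $G_{\sigma j}$ as in \eqref{cforms} is a multiple of some quadratic form $F_{si}$ exactly when one of the three variables appearing in its leading monomial $X_{a_{\sigma j}}X_{b_{\sigma j}}X_{c_{\sigma j}}$ can be factored out consistently with the leading monomial $X_{a_s}X_{b_s}$ of the distinguished quadric of weight $s=\sigma-(\text{that nongap})$; equivalently, the pair of remaining nongaps admits a nontrivial partition into two nongaps $\leq 2g-2$. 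So the first step is to make precise the dictionary: $G_{\sigma j}$ is ``redundant'' (a multiple of a quadric) if and only if, after removing the appropriate nongap from the triple, the resulting weight-$s$ pair is one of the $a_{si}X_{b_{si}}$ with $i\geq 1$, i.e.\ lies in the span of the $F_{si}$'s. The count $\eta$ should then be the number of such redundant cubics.

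Next I would stratify. Every nongap $\sigma\leq 6g-6$ expressible as a sum of three nongaps $\leq 2g-2$ decomposes as $n_0+n_0+n_k$ ($\sigma\leq 2g-2$), $n_0+n_k+n_l$ ($2g-1\leq\sigma\leq 4g-4$, $k,l\geq 1$), or $n_k+n_l+n_m$ ($k,l,m\geq 1$). Since $F_{si}^{(0)}=X_{a_{si}}X_{b_{si}}-X_{a_s}X_{b_s}$, multiplying by $X_{n_0}$ or by $X_{n_{g-1}}=X_{2g-2}$ produces cubic relations; the $(g-3)(g-2)$ term is exactly $2\dim I_2(\C) = 2\cdot\frac{1}{2}(g-2)(g-3)$, coming from $X_{n_0}\cdot F_{si}$ and $X_{n_{g-1}}\cdot F_{si}$ — but one must check these are genuinely distinct as cubic generators in the chosen basis and account for overlaps (a cubic of the form $X_{n_0}X_{2g-2}\cdot(\text{linear})$ versus $X_{n_0}^2\cdot(\text{linear})$, etc.). The remaining three floor-terms should come from multiplying $F_{si}$ by an intermediate variable $X_{n_j}$ with $1\leq j\leq g-2$: for each such $j$ one counts the quadratic relations $F_{si}$ whose product $X_{n_j}F_{si}$ is not already counted among the $X_{n_0}$- or $X_{2g-2}$-multiples, and whose leading term is still a legitimate basis monomial for $3\N$. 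The numbers $\lfloor (g-2)/2\rfloor$ (appearing with multiplicity $g-2$), $\lfloor(g-3)/2\rfloor$, and $\sum_{j=1}^{g-4}\lfloor(g-2-j)/2\rfloor$ strongly suggest counting, for fixed small indices, the number of ways a given weight splits into two ``interior'' nongaps — a floor of half the range of available splittings.

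Concretely, the key computation is: for a weight $w$ in the relevant range, the number of partitions $w=n_p+n_q$ with $1\leq p\leq q\leq g-2$ (excluding $p=q$ when it would duplicate, and excluding the distinguished partition) equals roughly $\lfloor (\text{length of interval})/2\rfloor$, because the nongaps $n_1,\dots,n_{g-2}$ form an arithmetic progression $g,g+1,\dots,2g-3$. So the three summands in $\eta$ are obtained by: (i) ranging over the $g-2$ choices of an interior factor $X_{n_j}$ and counting half the available complementary pairs — giving $(g-2)\lfloor(g-2)/2\rfloor$; (ii) a boundary correction for the factor adjacent to $X_{2g-2}$ or to the ``new'' nongaps $2g,2g+1$ — giving $\lfloor(g-3)/2\rfloor$; (iii) a triangular sum $\sum_{j=1}^{g-4}\lfloor(g-2-j)/2\rfloor$ accounting for triples of interior nongaps $n_k+n_l+n_m$ where removing one recovers a splittable interior pair but with a shrinking range as the smallest index $j$ grows. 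Then $\wp = \binom{g+2}{3}-(5g-5)-\eta$ is by definition the number of cubic generators in \eqref{cforms} that survive, since $\binom{g+2}{3}-(5g-5)=\dim I_3(\C)/\!\dots$ — more precisely it is the stated count of cubic forms $G_{\sigma j}$, and subtracting the redundant ones gives $\wp$.

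The main obstacle I anticipate is the bookkeeping of overlaps: a single redundant cubic $G_{\sigma j}$ can, a priori, be a multiple of more than one quadratic form (e.g.\ $X_{n_0}X_{2g-2}\cdot X_{n_k}$ is both $X_{n_0}\cdot(X_{2g-2}X_{n_k}-\cdots)$ and $X_{2g-2}\cdot(X_{n_0}X_{n_k})$ when $X_{n_0}X_{n_k}$ itself is not a leading term of a quadric, but could coincide with other products), so the decomposition must be arranged so that each redundant cubic is counted exactly once — presumably by assigning to each such cubic a canonical ``witnessing'' factorization (e.g.\ always factor out the largest, or the smallest, admissible nongap). Getting this canonical assignment right, and verifying that the resulting strata are disjoint and exhaust all redundant cubics, is the delicate part; once that is set up, each of the four terms in $\eta$ is a short arithmetic-progression counting argument using that $n_1,\dots,n_{g-2}=g,\dots,2g-3$ are consecutive integers. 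I would also double-check the edge cases $g=5,6$ against the explicit examples $\langle 5,6,7,8\rangle$ and $\langle 6,7,8,9,10\rangle$ promised in the last section, since those provide an independent numerical sanity check on the formula for $\eta$ and hence for $\wp$.
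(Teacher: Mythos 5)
Your overall strategy --- stratifying the redundant cubics as literal products $X_n\cdot F^{(0)}_{si}$, with the $X_{n_0}$- and $X_{2g-2}$-multiples giving the $(g-3)(g-2)$ term and multiples by interior variables giving the floor terms via arithmetic-progression counts --- is the same as the paper's. But the ``dictionary'' you set up as step one, on which all the later counting rests, is wrong, and this is a genuine gap. You test redundancy of $G^{(0)}_{\sigma j}$ by asking whether a variable can be removed from its \emph{leading} monomial so that what remains is a leading monomial $X_{a_{si}}X_{b_{si}}$ of some quadric. That is necessary but not sufficient: the product $X_nF^{(0)}_{si}=X_nX_{a_{si}}X_{b_{si}}-X_nX_{a_s}X_{b_s}$ coincides with one of the listed initial cubic forms only if its \emph{second} monomial $X_nX_{a_s}X_{b_s}$ is again the distinguished $\Lambda_3$-basis monomial of weight $s+n$; otherwise it is a difference $G^{(0)}_{\sigma j}-G^{(0)}_{\sigma j'}$ of two initial cubics and witnesses neither as a multiple. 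This tail condition is exactly the binding constraint in the paper's proof (e.g.\ $X_{g+k}X_gX_{g+i}\in\Lambda_3$ forces $i=g-2$ for arbitrary $k$, or $(i,k)=(g-3,0)$), and it is what produces the three floor terms; your proposal never identifies it, and instead reverse-engineers the strata from the shape of the formula.

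Concretely, your stated ``if and only if'' misclassifies cubics. For $g=6$, $\N=\langle 6,7,8,9,10\rangle$, take $G^{(0)}_{21}=X_7^3-X_6^2X_9$: removing $X_7$ from $X_7^3$ leaves $X_7^2$, the leading monomial of $F^{(0)}_{14}=X_7^2-X_6X_8$, so your criterion declares $G^{(0)}_{21}$ redundant. But $X_7F^{(0)}_{14}=X_7^3-X_6X_7X_8\neq G^{(0)}_{21}$, and no variable divides both $X_7^3$ and $X_6^2X_9$, so $G^{(0)}_{21}$ is not a multiple of any quadric (it lies in the quadric ideal, being $X_7F^{(0)}_{14}+X_6F^{(0)}_{15}$, but the proposition counts literal multiples, and the paper retains $G^{(0)}_{21}$ among its $\wp=8$ cubic generators). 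Your criterion would discard every initial cubic whose leading monomial has two factors among $X_{g+1},\dots,X_{2g-3}$, yielding $\wp=3$ rather than $8$ in this example. The double-counting issue you flag and the identification of the individual floor terms cannot be settled until the redundancy test is replaced by the tail condition; once it is, the counts do reduce to the short arithmetic-progression computations you anticipate.
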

 \begin{proof}
Since the fixed basis for $\Lambda_2$ is $\{X_0^2, X_0X_{g+i},X_gX_{g+i}, X_{g+j}X_{2g-2}\}$ with $i=0,\ldots,g-2$ and $j=1,\ldots,g-2$,
the initial quadratic forms are
 \begin{equation*}
F_{sl}^{(0)}=X_{a_{sl}}X_{b_{sl}}-X_gX_{g+i}\ \mbox{ and }\ F_{sl}^{(0)}=X_{a_{sl}}X_{b_{sl}}-X_{g+j}X_{2g-2},
 \end{equation*}
where the $2$-monomials nonbasis elements of $\Lambda_2$ are the products $X_{g+i}X_{g+j}$ where $1\leq i\leq j=1,\ldots, g-3$. 
While the fixed basis for $\Delta_3$ is 
$$\{X_0^2X_{i}, X_0X_{as}X_{bs}, X_{as}X_{bs}X_{2g-2},X_{g}^2X_{2g-3}\}\,,$$ with $i=0,g,g+1,\dots,2g-2$
and $\{X_{as}X_{bs}\}$ the above fixed basis for $\Delta_2$. 
Set $F:=F_{sl}^{(0)}$ for a initial quadratic form. It is clear that  the $(g-3)(g-2)$ products $X_0F$ and $X_{2g-2}F$ are cubic forms for every $F$.  
Since the monomials $X_{g+k}X_{g+i}X_{g+j}\notin\Lambda_3$ for $k=0,\ldots, g-3$ and $i,j=1,\ldots,g-3$, 
the  product $X_{g+k}F$ defines a cubic form when $X_{g+k}X_{g}X_{g+i}$ or $X_{g+k}X_{g+j}X_{2g-2}$ 
 are in $\Lambda_3$. In the first case, $X_{g+k}X_{g}X_{g+i}\in\Lambda_3$ just for $i=g-2$,
 $k=0,\ldots, g-3$ and for $i=g-3, k=0$. Hence we get the following $(g-2)\left\lfloor\frac{g-2}{2}\right\rfloor+\left\lfloor\frac{g-3}{2}\right\rfloor$ cubic forms
 $$X_{g+k}\left(X_{a_{sl}}X_{b_{sl}}-X_gX_{2g-2}\right), \text{ with }k=0,\ldots, g-3$$
 and
 $$X_g\left(X_{a_{sl}}X_{b_{sl}}-X_gX_{2g-3}\right).$$
 In the remaining case, $X_{g+k}X_{g+j}X_{2g-2}\in\Lambda_3$ just for $k=0$, $j=1,\ldots, g-2$. So we get the 
 following $\sum_{j=1}^{g-4}\left\lfloor\frac{g-2-j}{2}\right\rfloor$ initial cubic forms
 $$X_g\left(X_{a_{sl}}X_{b_{sl}}-X_{g+j}X_{2g-2}\right), \ j=1,\ldots, g-4.$$\end{proof}

It is straightforward that the quadratic $F_{si}^{(0)}$ and cubic forms $G_{\sigma j}^{(0)}$ vanish identically on the monomial curve $\C^{(0)}$.
The next lemma show that they generate the ideal of $\C^{(0)}$.
  
 \begin{lem}\label{lemaI0}
 The canonical ideal $I({\C}^{(0)})$ is generated by the $\frac{1}{2}(g-2)(g-3)$ quadratic forms $F_{si}^{(0)}$ and by the $\wp$ cubic forms $G_{\sigma j}^{(0)}$.
 \end{lem}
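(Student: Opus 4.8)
The plan is to prove $I(\C^{(0)})=J$, where $J:=\langle F_{si}^{(0)},\,G_{\sigma j}^{(0)}\rangle$ is the ideal generated by all the initial quadratic and cubic forms. Each $F_{si}^{(0)}$ and $G_{\sigma j}^{(0)}$ vanishes on $\C^{(0)}$ (so $J\subseteq I(\C^{(0)})$) and is a binomial whose two monomials have the same $\N$-weight, so $J$ is weight-homogeneous. By the decomposition \eqref{directsum} we have $\k[X_{n_0},\dots,X_{n_{g-1}}]_r=I_r(\C^{(0)})\oplus\Lambda_r$; since $J_r\subseteq I_r(\C^{(0)})$ and $J_r\cap\Lambda_r=0$, to get $I_r(\C^{(0)})=J_r$ it suffices to show $\k[X]_r=J_r+\Lambda_r$ for every $r$, i.e.\ that every $r$-monomial $M$ is congruent modulo $J$ to the monomial $M^{*}$ of the $P$-hermitian basis of $\Lambda_r$ having the same $\N$-weight as $M$ (which exists and is unique, the weights of these basis monomials being pairwise distinct and exhausting $r\N$). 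I would prove this by induction on $r$, the cases $r\le1$ being trivial since then $\Lambda_r=\k[X]_r$.

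If $X_0\mid M$, write $M=X_0M'$; by induction $M'\equiv (M')^{*}\pmod J$ with $(M')^{*}\in\Lambda_{r-1}$, and a direct inspection of the displayed $P$-hermitian basis shows $X_0\Lambda_{r-1}\subseteq\Lambda_r$, so $M\equiv X_0(M')^{*}=M^{*}\pmod J$. If $X_0\nmid M$ and $r=2$, then $M$ either already lies in $\Lambda_2$ or is one of the monomials $X_{a_{si}}X_{b_{si}}$ ($i\ge1$) occurring in some $F_{si}^{(0)}$, which reduces it to $X_{a_s}X_{b_s}=M^{*}$. If $X_0\nmid M$ and $r\ge3$, all variables of $M$ lie among $X_g,\dots,X_{2g-2}$, and I would first reduce $M$ with the quadratic forms: as long as some degree-$2$ factor $X_aX_b$ of $M$ is not in $\Lambda_2$ (equivalently, is not the maximally spread-out partition of its weight), replace it by its $\Lambda_2$-representative via the relevant $F_{si}^{(0)}$. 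This never creates a factor $X_0$ (the $\Lambda_2$-representative of a weight $\ge 2g$ still has both indices $\ge g$), while it strictly increases the bounded convexity quantity $\sum_i e_i\,n_i^{2}$ for $M=\prod_iX_{n_i}^{e_i}$; hence after finitely many steps $M$ becomes \emph{quadratically reduced}: every degree-$2$ factor lies in $\Lambda_2$.

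The key combinatorial step is the classification of quadratically reduced, $X_0$-free monomials of degree $r\ge3$. Since the $\Lambda_2$-monomials with both indices $\ge g$ are exactly $X_gX_{g+t}$ ($0\le t\le g-2$) and $X_{g+t}X_{2g-2}$ ($1\le t\le g-2$), such an $M$ must be $X_g^{p}X_{2g-2}^{q}$ (with $p+q=r$) or $X_g^{p}X_{g+m}X_{2g-2}^{q}$ (with $p+q=r-1$, $1\le m\le g-3$, a single ``middle'' variable). Comparing with the $P$-hermitian basis of $\Lambda_r$, all of these lie in $\Lambda_r$ \emph{except} when $p\ge3$, or $p=2$ and $m\le g-4$ in the second family. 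In the first case $X_g^{3}=X_{a_{\sigma j}}X_{b_{\sigma j}}X_{c_{\sigma j}}$ for a suitable index in the list of partitions of $\sigma=3g$, and the companion monomial $X_{a_{\sigma}}X_{b_{\sigma}}X_{c_{\sigma}}$ of $G_{\sigma j}^{(0)}$ is divisible by $X_0$, because $0+(g+2)+(2g-2)=3g$ realises the minimal first index $a_{\sigma}=0$; in the second case $X_g^{2}X_{g+m}=X_{a_{\sigma j}}X_{b_{\sigma j}}X_{c_{\sigma j}}$ for $\sigma=3g+m$, and again $X_{a_{\sigma}}X_{b_{\sigma}}X_{c_{\sigma}}$ is divisible by $X_0$, the hypothesis $m\le g-4$ being exactly what makes $0+(g+m+2)+(2g-2)=3g+m$ a legitimate partition. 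Either replacement yields $M\equiv X_0 N\pmod J$ with $N$ of degree $r-1$, so $M\equiv (X_0N)^{*}=M^{*}\pmod J$ by the first case; this closes the induction and gives $J=I(\C^{(0)})$.

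The main obstacle is exactly the combinatorics just sketched: one must check that after exhausting the quadratic relations only that short explicit list of monomials survives, that the genuinely cubic member $X_g^{2}X_{2g-3}X_{2g-2}^{r-3}$ — whose weight $4g-3+(r-3)(2g-2)$ admits no partition involving $X_0$, so that no further reduction is possible — is precisely the one already present in $\Lambda_r$ (this is the numerical reason a cubic generator is unavoidable), and that the recursion terminates: each use of a cubic form extracts a factor $X_0$ and passes to the previous degree, while the quadratic reductions inside a fixed degree terminate by the convexity estimate. Finally, having established $J=I(\C^{(0)})$, the preceding Proposition identifies $\eta$ of the cubic forms $G_{\sigma j}^{(0)}$ as variables times quadratic forms $F_{si}^{(0)}$, hence as elements of $\langle F_{si}^{(0)}\rangle$; discarding them shows that the $\tfrac{1}{2}(g-2)(g-3)$ quadratic forms together with the remaining $\wp$ cubic forms already generate $I(\C^{(0)})$.
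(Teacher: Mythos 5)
Your proof is correct, and it shares the paper's overall skeleton --- reduce every form modulo the binomials to the distinguished complement $\Lambda_r$ and conclude from $\Lambda_r\cap I_r(\C^{(0)})=0$ --- but it executes the key step quite differently. The paper orders monomials lexicographically by the vector $\left(\sum i_k,\sum n_ki_k,-i_0,-i_{g-1},\dots,-i_1\right)$, asserts (without verification) that the binomials $F_{si}^{(0)}$ and $G_{\sigma j}^{(0)}$ form a Groebner basis of $\mathcal{J}$, and then notes that the remainder of an isobaric form in $I(\C^{(0)})$ is a single $\Lambda_r$-monomial whose coefficient is the sum of the coefficients, hence zero. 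You instead carry out the rewriting by hand: the classification of the $X_0$-free, quadratically reduced monomials as $X_g^pX_{2g-2}^q$ and $X_g^pX_{g+m}X_{2g-2}^q$, the identification of exactly which of these escape the quadratic relations and are absorbed by the cubics with $a_\sigma=0$ (namely $p\ge 3$, or $p=2$ with $m\le g-4$), the observation that $X_g^2X_{2g-3}X_{2g-2}^{r-3}$ admits no partition of its weight involving $X_0$ and so must sit in $\Lambda_r$, and the termination argument via the convexity functional $\sum e_in_i^2$ (which plays the role of the paper's monomial order). In effect your combinatorics is precisely the verification that the paper's Groebner-basis claim leaves implicit, so your route is more elementary and more self-contained, at the cost of length; the paper's route is shorter but rests on an unproved assertion. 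Your closing step --- discarding the $\eta$ cubic binomials that are monomial multiples of quadratic ones via the preceding Proposition, so as to land on the $\wp$ cubic generators named in the statement --- is a point the paper's proof glosses over and is correctly handled.
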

 \begin{proof}
 Since the $I({\C}^{(0)})$ is generated by homogeneous and isobaric forms, all we have to do is to show that for a homogeneous and isobaric form belongs to
 $I({\C}^{(0)})$ if and only if belongs to the ideal $\mathcal{J}$ generated by the binomials $F_{si}^{(0)}$ and $G_{\sigma j}^{(0)}$. It is just obvious that
 $\mathcal{J}\subseteq I({\C}^{(0)}) $. For the opposite inclusion we order the monomials $\prod_{k=0}^{g-1}X_{n_k}^{i_k}$ according to the
lexicographic ordering of the vectors $$\left(\sum\,i_k,\sum\,n_k\,i_k,-i_0,-i_{g-1},\dots,-i_1\right).$$ In this way, the binomials $F_{si}^{(0)}$ and $G_{\sigma j}^{(0)}$
form a Groebner basis for $\mathcal{J}$. Now, for each homogeneous form $F$ of degree $r$ which is also isobaric of weight $\omega$ we divide it by the Groebner basis getting a decomposition
$$F=\sum H_{si}F_{si}^{(0)}+T_{\sigma j}G_{\sigma j}^{(0)}+R$$ where $R\in\Lambda_{r}$ and $H_{si}$ and $T_{\sigma j}$ are homogeneous of degree $r-2$ and $r-3$ respectively, 
and weight $\omega -s$ and $\omega-\sigma$, respectively. The remainder $R$ is the only monomial in $\Lambda_{r}$ of weight $\omega$ whose coefficient is equal to the sum
of the coefficients of $F$. Since $F\in I({\C}^{(0)})$ the sum of its coefficients is equal to zero, then $R=0$.
 \end{proof}
 
A different proof of the above lemma can be found in \cite[Thm. 1.1]{GSS} by no\-ting that the symmetric semigroup $\N=\langle0,g,g+1,\dots,2g-2\rangle$ is generated by a generalized arithmetic
sequence. So the ideal $I(\C^{(0)})$ of the monomial curve $\C^{(0)}$ is also generated by the $2\times 2$ minors of suitable two matrices. It can be seen immediately that
the ideal given by this $2\times 2$ minors is equal to the ideal generated by the binomials $F_{si}^{(0)}$ and $G_{\sigma j}^{(0)}$.

The following lemma is a generalization of result in \cite[Lemma 2.3]{CS}, where due to the assumptions the authors just deal with the first syzygies
of quadratic forms. Here we also deal with syzygies of cubic forms, which will induce nonlinear syzygies, see the equations \eqref{syzyg5}, pg. 14, and \eqref{syzyg6}, pg. 17.
  
  
 \begin{syzlem}\label{lem2}

For each of the $\frac{1}{2}(g-3)(g-4)$ quadratic forms $F_{s'i'}^{(0)}$ 
different from $F_{n_i+2g-2,1}^{(0)} (i=1,\ldots, g-3)$ there is a syzygy of the form 
\begin{equation}\label{eq2}
X_{2g-2}F_{s'i'}^{(0)}+\displaystyle\sum_{nsi}^{}\epsilon_{nsi}^{(s'i')}X_nF_{si}^{(0)}=0
\end{equation}
and for each cubic forms $G_{\sigma'j'}^{(0)}$ different from $G_{4g-4,1}^{(0)}$, there is a syzygy of the form
 \begin{equation}\label{eq3}
 X_{2g-2}G_{\sigma'j'}^{(0)}+\displaystyle\sum_{q\sigma j}^{}\rho_{q\sigma j}^{(\sigma'j')}X_qG_{\sigma j}^{(0)}=0,
 \end{equation}
 where the coefficients $\epsilon_{nsi}^{(s'i')}$, $ \rho_{q\sigma j}^{(\sigma'j')}$ are integers equal to $1, -1$ or $0$, 
 and where the sum is take over the nongaps $n, q<2g-2$, the double indexes $si$ with $s+n=2g-2+s'$ and $\sigma j$ 
 with $q+\sigma=2g-2+\sigma'$. 
 \end{syzlem}
 \begin{proof}Given a quadratic form $F=F_{s'i'}^{(0)}$ or $F=-F_{s'i'}^{(0)}$, we can write
 \begin{equation*}
 F=X_mX_n-X_qX_r,
 \end{equation*}
 where $m, n, q, r$ are nongaps satisfying $m+n=q+r$ and $q<m\leq n<r<2g-2$. If $r+1$ is a gap then, by symmetry, 
 $k:=2g-2-r+n$ is a nongap and we find the syzygy
 \begin{equation*}
X_{2g-2}(X_mX_n-X_qX_r)+X_r(X_qX_{2g-2}-X_mX_k)-X_m(X_nX_{2g-2}-X_rX_k)=0,
 \end{equation*}
 The binomials in the brackets can be written as $F_{si}^{(0)}-F_{sj}^{(0)}$, $F_{si}^{(0)}$ or $-F_{sj}^{(0)}$. 
 Analogously if $m+1$ is a gap then we take the nongap $k:=2g-2-m+r$ and we obtain a syzygy as above. Now we can 
 assume that $r+1$ and $m+1$ are nongaps, hence we have the syzygy
$$\begin{array}{l}
 X_{2g-2}(X_mX_n-X_qX_r)+X_q(X_{2g-2}X_r-X_{2g-3}X_{r+1})-\\
  X_{2g-3}(X_{m+1}X_n-X_qX_{r+1})-X_n(X_mX_{2g-2}-X_{2g-3}X_{m+1})=0.\\
 \end{array}$$
 
\noindent For a cubic form, if we put $G=G^{(0)}_{\sigma j}$ or $G=-G^{(0)}_{\sigma j}$ then we can write  
 \begin{equation*}
 G=X_mX_nX_p-X_qX_rX_t,
 \end{equation*} 
 where $m, n, p, q, r, s$ are nongaps satisfying $m+n+p=q+r+t$ and $q<m\leq n\leq r\leq p<t\leq 2g-2$. 
 
 If $p+1$ is a gap then, by symmetry, the integer $k:=2g-2-p+q$ is a nongap smaller than $2g-2$, hence we have the syzygy
 $$\begin{array}{r}
 X_{2g-2}(X_mX_nX_p-X_qX_rX_t)+X_r(X_{2g-2}X_tX_q-X_tX_pX_k)-\\
 X_p(X_{2g-2}X_mX_n-X_rX_tX_k)=0,\\
 \end{array}$$
 where the binomials in the brackets can be written as 
 $G_{\sigma j}^{(0)}-G_{\sigma i}^{(0)}$, $G_{\sigma j}^{(0)}$ or $-G_{\sigma i}^{(0)}$. 
 Analogously, if $r+1$ is a gap then $k:=2g-2-r+p$ is a nongap, and therefore we obtain the syzygy
$$\begin{array}{r}
  X_{2g-2}(X_mX_nX_p-X_qX_rX_t)+X_m(X_kX_rX_n-X_{2g-2}X_pX_n)-\\
 X_r(X_kX_mX_n-X_{2g-2}X_tX_q)=0.\\
 \end{array}$$
 Now we can assume that $p+1$ and $r+1$ are the nongaps. We just take the syzygy
 $$\begin{array}{l}
 X_{2g-2}(X_mX_nX_p-X_qX_rX_t)+X_{2g-3}(X_{r+1}X_qX_t-X_{p+1}X_nX_m) - \\ X_m(X_pX_{2g-2}X_n-X_{p+1}X_{2g-3}X_n)
- X_q(X_{2g-3}X_{r+1}X_t-X_{2g-2}X_rX_t)=0.
 \end{array}$$
 \end{proof}
 \begin{rmk} 
 \emph{The $\eta$ syzygies corresponding to the cubic forms which are multiples of the quadratics are trivial, 
 therefore we just to consider syzygies for the $\wp-1$ cubic forms, however, these $\wp-1$ syzygies are not necessarily linear.}
 \end{rmk}
 \begin{lem}\label{lem3}
 Let $I$ be the ideal generated by the $\frac{1}{2}(g-2)(g-3)$ quadratic forms $F_{si}$ and by the $\wp$ cubic forms $G_{\sigma j}$. Then,
 \begin{equation*}
 {\mathbf{k}[X_{n_0},\ldots,X_{n_{g-1}}]}_r=I_r+\Lambda_r\ \mbox{for each}\ r\geq2.
 \end{equation*}
 \end{lem}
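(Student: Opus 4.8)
The plan is to derive the statement from the monomial case, Lemma~\ref{lemaI0}, by the leading-term technique underlying Stoehr's construction. The point to watch is that the forms $F_{si}$ and $G_{\sigma j}$ of \eqref{qdforms} and \eqref{cforms} are \emph{not} isobaric, so one cannot literally repeat the isobaric-division argument of Lemma~\ref{lemaI0}; instead one argues through leading monomials.

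First I would record exactly what the proof of Lemma~\ref{lemaI0} provides: for the term order $\prec$ given by the lexicographic order of the vectors $\bigl(\sum i_k,\ \sum n_k i_k,\ -i_0,\ -i_{g-1},\dots,-i_1\bigr)$, the binomials $F_{si}^{(0)}$ and $G_{\sigma j}^{(0)}$ form a Groebner basis of $I(\C^{(0)})$, and the normal form of every homogeneous polynomial lies in $\Lambda$. Since $\dim\Lambda_r=\codim I_r(\C^{(0)})$, it follows that for each $r\geq 2$ the degree-$r$ monomials divisible by none of the leading monomials $\mathrm{LT}_\prec(F_{si}^{(0)})$, $\mathrm{LT}_\prec(G_{\sigma j}^{(0)})$ are precisely the monomials of the fixed basis of $\Lambda_r$. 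This is the only input from the monomial case I would use.

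Next I would note that deforming the generators does not move the leading monomials. By \eqref{qdforms} the difference $F_{si}-F_{si}^{(0)}=-\sum_{n=0}^{s-1}c_{sin}X_{a_n}X_{b_n}$ is a combination of $2$-monomials of weight $n<s$, whereas the only two terms of $F_{si}$ of weight $s$ are $X_{a_{si}}X_{b_{si}}$ and $X_{a_s}X_{b_s}$, the two terms of $F_{si}^{(0)}$; as $\prec$ compares total degree first, weight second, and the coordinates $-i_0,-i_{g-1},\dots$ only afterwards, this lower-weight tail cannot overtake either term, so $\mathrm{LT}_\prec(F_{si})=\mathrm{LT}_\prec(F_{si}^{(0)})$, and likewise $\mathrm{LT}_\prec(G_{\sigma j})=\mathrm{LT}_\prec(G_{\sigma j}^{(0)})$ from \eqref{cforms}. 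Then, given $r\geq 2$ and a degree-$r$ monomial $M$, I would divide $M$ by the family $\{F_{si}\}\cup\{G_{\sigma j}\}$ with respect to $\prec$: each reduction cancels one term and introduces only $\prec$-smaller terms of degree $r$, so the process terminates with $R\in\k[X_{n_0},\dots,X_{n_{g-1}}]_r$ such that $M-R\in I_r$ and no term of $R$ is divisible by any $\mathrm{LT}_\prec(F_{si})$ or $\mathrm{LT}_\prec(G_{\sigma j})$. By the previous remark these are the leading monomials of the $F_{si}^{(0)}$ and $G_{\sigma j}^{(0)}$, so every monomial of $R$ lies in the fixed basis of $\Lambda_r$, i.e.\ $R\in\Lambda_r$; hence $M\in I_r+\Lambda_r$, and as $M$ was arbitrary, $\k[X_{n_0},\dots,X_{n_{g-1}}]_r=I_r+\Lambda_r$.

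The main point of care is the invariance of the leading monomials: one has to be sure that $\prec$ ranks weight strictly before the coordinates $-i_k$, so that the ``deformation tails'' $-\sum c_{sin}X_{a_n}X_{b_n}$ and $-\sum d_{\sigma jn}X_{a_n}X_{b_n}X_{c_n}$ are irrelevant to the leading term; granting this, the lemma is a formal consequence of Lemma~\ref{lemaI0}, with no computation involved. Note that the argument gives only the spanning equality $\k[X_{n_0},\dots,X_{n_{g-1}}]_r=I_r+\Lambda_r$; the upper bound on $\dim I_r$ needed to promote it to $I_r\oplus\Lambda_r$ is exactly where the Syzygy Lemma~\ref{lem2} enters at the next step.
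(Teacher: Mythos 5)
Your proof is correct and is essentially the paper's argument in Groebner-basis clothing: the paper likewise reduces to Lemma~\ref{lemaI0} by observing that $F_{si}-F_{si}^{(0)}$ and $G_{\sigma j}-G_{\sigma j}^{(0)}$ consist only of terms of strictly smaller weight, but instead of a term-by-term division with respect to the full monomial order it peels off the entire top-weight quasi-homogeneous component $S$ at once, writes $S-R=\sum S_{si}F_{si}^{(0)}+\sum H_{\sigma j}G_{\sigma j}^{(0)}$ with homogeneous quasi-homogeneous cofactors, swaps in the deformed forms, and inducts on the weight. Your closing remark is also accurate: the lemma only yields the spanning statement $I_r+\Lambda_r$, and the directness of the sum is what the syzygy analysis must deliver afterwards.
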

 \begin{proof}
 Let $F$ be a homogeneous polynomial of degree $r$ and weight $w$. Let $S$ be its quasi-homogeneous component of weight $w$ and $R$ 
 the unique monomial in $\Lambda_r$ of weight $w$ whose coefficient is the sum of the coefficients of $S$. Thus, $S-R\in I(\C^{(0)})$ 
 and by Lemma \ref{lem2} we can write the expression
 \begin{equation}\label{eq1}
 S-R=\displaystyle\sum_{si}^{}S_{si}F_{si}^{(0)}+\displaystyle\sum_{\sigma j}^{}H_{\sigma j}G_{\sigma j}^{(0)}.
 \end{equation}
 Replacing each polynomial $S_{si}$ and $H_{\sigma j}$ with its homogeneous component of degree $r-2$ and $r-3$, 
 respectively, we can take $S_{si}$ and $H_{\sigma j}$ homogeneous of degree $r-2$ and $r-3$, respectively. Likewise, 
 we can assume that $S_{si}$ and $H_{\sigma j}$ are quasi-homogeneous of weight $w-s$ and $w-\sigma$, respectively. 
 Then the polynomial 
 \begin{equation*}
  F-R-\displaystyle\sum_{si}^{}S_{si}F_{si}^{(0)}-\displaystyle\sum_{\sigma j}^{}H_{\sigma j}G_{\sigma j}^{(0)}
 \end{equation*}
 is homogeneous of degree $r$ and weight smaller than $w$. Now, the proof follows by induction on $w$. 
 \end{proof}
 \begin{rmk}
 \emph{We see that if the curve $\C$ is not trigonal, then the last summand in \ref{eq1} does not appear because the 
 ideal $I(\C^{(0)})$ is generated by the $\frac{1}{2}(g-2)(g-3)$ quadratic forms $F_{si}^{(0)}$.}
 \end{rmk}


Let us now invert the considerations of the previous section. Instead of take a pointed canonical 
Gorenstein curve whose Weierstrass semigroup at the marked point is $\N:=\langle g, g+1,\ldots,2g-2\rangle$, we 
consider the semigroup $\N$ and the associated monomial curve $\C^{(0)}$. We want to deform it in order to get a 
Gorenstein curve with a marked point whose Weierstrass semigroup is also $\N$.
By Lemma \ref{lemaI0} the ideal of the monomial curve $\mathcal{C}^{(0)}$ is generated by the 
$\frac{1}{2}(g-2)(g-3)$ quadratic forms $F_{si}^{(0)}$ and by the $\wp$ cubic forms $G_{\sigma j}^{(0)}$. 
So, let us consider a \textit{pre-deformation} of the ideal of $\C^{(0)}$ which is
  \begin{equation*}\label{qddeforms}
  F_{si}=X_{a_{si}}X_{b_{si}}-X_{a_s}X_{b_s}-\displaystyle\sum_{n=0}^{s-1}c_{sin}X_{a_{n}}X_{b_{n}}
   \end{equation*} 
   and 
   \begin{equation*}\label{cdeforms}
    G_{\sigma j}=X_{a_{\sigma j}}X_{b_{\sigma j}}X_{c_{\sigma j}}-X_{a_{\sigma }}X_{b_{\sigma }}X_{c_{\sigma }}-\displaystyle\sum_{n=0}^{\sigma-1}d_{\sigma jn}X_{a_{n}}X_{b_{n}}X_{c_{n}},
     \end{equation*}
where the coefficients $c_{sin}$ and $d_{\sigma jn}$ belongs to the ground field $\k$. It is clear that we are looking for 
conditions on this coefficients such that this \textit{pre-deformation} is a 
deformation: a curve of degree $2g-2$ and genus $g$ with a marked point whose Weierstrass semigroup is $\N$. 
The main idea is to apply the Syzygy Lemma and erase the superscript zeros of the quadratic and cubic forms and then, 
by means of \eqref{directsum}, get the conditions on the coefficients.

Replacing  the left-hand side of the equation \eqref{eq2} of the Syzygy Lemma the binomials $F_{s'i'}^{(0)}$ and $F_{si}^{(0)}$ 
with the quadratic forms $F_{s'i'}$ and $F_{si}$ we obtain for each of the $\frac{1}{2}(g-3)(g-4)$ double indexes $s'i'$ a 
linear combination of cubic monomials of weight less than $s'+2g-2$, which by Lemma \ref{lem3} admits the decomposition
 \begin{equation*}
 X_{2g-2}F_{s'i'}+\displaystyle\sum_{nsi}^{}\epsilon_{nsi}^{(s'i')}X_nF_{si}=\displaystyle\sum_{nsi}^{}\eta_{nsi}^{(s'i')}X_nF_{si}+R_{s'i'},
 \end{equation*}
 where the sum on the right-hand side is taken over all the nongaps $n\leq 2g-2$, the double indexes $si$ with $n+s<s'+2g-2$, 
 the coefficients $\epsilon_{nsi}^{(s'i')}$ are constants and where $R_{s'i'}$ is a linear combination of cubic monomials 
 of pairwise different weights less than $s'+2g-2$. 
 
 Repeating the above procedure for the equation \eqref{eq3} on the Syzygy Lemma, we obtain a decomposition
 \begin{equation*}
 X_{2g-2}G_{\sigma'j'}+\displaystyle\sum_{q\sigma j}^{}\rho_{q\sigma j}^{(\sigma'j')}X_qG_{\sigma j}=
 \displaystyle\sum_{mq\sigma j}^{}\mu_{mq\sigma j}^{(\sigma'j')}X_mX_qF_{\sigma j}+\displaystyle\sum_{q\sigma j}^{}\nu_{q\sigma j}^{(\sigma'j')}X_qG_{\sigma j}+R_{\sigma'j'},
 \end{equation*}
 where the sum on the right-hand side is taken over the nongaps $m, q\leq 2g-2$, the indexes $mq\sigma$ and $q\sigma$ 
 with $m+q+\sigma<2g-2+\sigma'$ and $q+\sigma<2g-2+\sigma'$, the coefficients 
 $\mu_{mq\sigma j}^{(\sigma'j')}, \nu_{q\sigma j}^{(\sigma'j')}$ are constants and where $R_{\sigma'j'}$ is a linear 
 combination of quartic monomials of pairwise different weights less than $2g-2+\sigma'$.
 
 For each nongap $m<s'+2g+2$ (resp. $r<\sigma'+2g+2$) let $\varrho_{s'i'm}$ (resp. $\vartheta_{\sigma'j'r}$ ) 
 be the unique coefficient of $R_{s'i'}$ (resp. $R_{\sigma' j'}$) of weight $m$ (resp. $r$). We do not lost 
 information about the coefficients of $R_{s'i'}$ and $R_{\sigma' j'}$ replacing the variables $X_n$ by powers $t^n$ 
 of an indeterminate $t$. Hence it is convenient to consider the polynomials
 
 \begin{equation*}
 R_{s'i'}(t^{n_0},\ldots,t^{n_{g-1}})=\displaystyle\sum_{m=0}^{s'+2g-2}\varrho_{s'i'm}t^m
 \end{equation*} 
 and
 \begin{equation*}
  R_{\sigma'j'}(t^{n_0},\ldots,t^{n_{g-1}})=\displaystyle\sum_{r=0}^{\sigma'+2g-2}\vartheta_{\sigma'j'r}t^r.
 \end{equation*}
 
 We can assume that the coefficients $\varrho_{s'i'm}$ are quasi-homogeneous polynomial expressions of weight 
 $s'+2g-2-m$ in the constants $c_{sin}$ while the coefficients $\vartheta_{\sigma'j'r}$ are quasi-homogeneous 
 polynomial expressions of weight $\sigma'+2g-2-r$ in the constants $d_{\sigma j n}$.
 
 \begin{thm}
 Let $\N$ be a nonhyperelliptic and non-ordinary numerical symmetric semigroup of genus $g$.
 The respective $\frac{1}{2}(g-2)(g-3)$ quadratic and the $\wp$ cubic forms, $F_{si}=F_{si}^{(0)}-\sum_{n=0}^{s-1}c_{sin}X_{a_{sin}}X_{b_{sin}}$ 
 and $G_{\sigma j}=G_{\sigma j}^{{(0)}}-\sum_{n=0}^{\sigma}d_{\sigma jn}X_{a_{n}}X_{b_{n}}X_{c_{n}}$ 
 cut out a canonical integral Gorenstein curve on $\mathbb{P}^{g-1}$ if and only if the coefficients $c_{sin}, d_{\sigma jn}$
 satisfy the quasi-homogeneous equations $\varrho_{s'i'm}=0$ and $\vartheta_{\sigma'j'r}=0$. In this case, the point $P=(0:\ldots:0:1)$ 
 is a smooth point of the canonical curve with Weierstrass semigroup $\N$.
 \end{thm}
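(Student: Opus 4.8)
The plan is to mimic Stoehr's strategy from \cite{St93} and \cite{CS}, now accounting for the cubic forms. First I would establish the ``if'' direction. Assume the coefficients satisfy $\varrho_{s'i'm}=0$ and $\vartheta_{\sigma'j'r}=0$. The vanishing of these expressions means precisely that each remainder $R_{s'i'}$ and $R_{\sigma'j'}$ is zero, so after erasing the superscript zeros the syzygies of the Syzygy Lemma lift to genuine syzygies among the deformed forms $F_{si}$, $G_{\sigma j}$. Using the direct sum decomposition \eqref{directsum} together with Lemma \ref{lem3}, one shows by induction on the weight that the ideal $I$ generated by the $F_{si}$ and $G_{\sigma j}$ still satisfies $\mathbf{k}[X_{n_0},\dots,X_{n_{g-1}}]_r = I_r \oplus \Lambda_r$ for every $r\geq 2$; the lifted syzygies are exactly what is needed to propagate the reduction step. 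This forces $\dim I_r = \binom{r+g-1}{r} - (2r-1)(g-1)$, the same Hilbert function as the monomial curve $\C^{(0)}$, so the subscheme $\C \subset \mathbb{P}^{g-1}$ cut out by $I$ is a (possibly reducible or non-reduced a priori) curve of arithmetic genus $g$ and degree $2g-2$ whose homogeneous coordinate ring is Cohen--Macaulay with the right Betti-style numerics; since $\C^{(0)}$ is arithmetically Gorenstein and the property is open, $\C$ is arithmetically Gorenstein as well.

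Next I would pin down the point $P=(0:\dots:0:1)$ and integrality. Because all the $F_{si}$ and $G_{\sigma j}$ have the shape ``leading binomial minus lower-weight terms,'' substituting $X_{n_k}\mapsto t^{n_k}$ and looking at the lowest-order behaviour shows $P$ lies on $\C$, is a smooth point, and that the local parameter there realizes pole orders exactly at the elements of $\N$ — i.e. the Weierstrass semigroup at $P$ is $\N$. In particular $P$ is a smooth point lying on a unique component $\C_0$ of $\C$; since a canonical Gorenstein curve of the correct arithmetic genus and degree containing a smooth point with a numerical (hence $\mathbb{P}^1$-unirational-friendly) semigroup cannot split off extra components without contradicting the degree/genus count, $\C$ is integral. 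Here one uses that $\C$ is non-degenerate in $\mathbb{P}^{g-1}$ (the $g$ coordinate monomials are independent on $\C^{(0)}$, hence on the flat nearby $\C$) together with the Gorenstein condition to rule out embedded or floating components.

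For the ``only if'' direction I would argue contrapositively: suppose the deformed forms cut out an integral Gorenstein curve $\C$ of degree $2g-2$ and genus $g$. Then Max--Noether for subcanonical Gorenstein curves (established earlier in the excerpt via the $P$-hermitian bases) gives $\mathbf{k}[X]_r = I_r(\C)\oplus\Lambda_r$, and the $F_{si}, G_{\sigma j}$, vanishing on $\C$, lie in $I_r(\C)$. Feeding them into the left-hand sides of the syzygy identities of the Syzygy Lemma produces elements of $I(\C)$ that are $\Lambda$-reduced linear combinations of pairwise-distinct-weight monomials, namely the $R_{s'i'}$ and $R_{\sigma'j'}$; but $I(\C)\cap\Lambda = 0$, so every coefficient $\varrho_{s'i'm}$ and $\vartheta_{\sigma'j'r}$ must vanish. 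This is the clean half once the bookkeeping of which monomials survive in the remainders is in place.

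The main obstacle I expect is the ``if'' direction — specifically, controlling the \emph{nonlinear} remainders coming from the cubic syzygies. In Stoehr's and Contiero--Stoehr's setting the remainders $R_{s'i'}$ are built only from quadratic syzygies and stay linear in the $c_{sin}$, but here the reductions $X_{2g-2}G_{\sigma'j'} + \sum \rho X_q G_{\sigma j}$ pick up genuinely quadratic terms $\sum \mu_{mq\sigma j} X_m X_q F_{\sigma j}$ (cf. the remark after the Syzygy Lemma and equations \eqref{syzyg5}, \eqref{syzyg6}), so one must check that after iterating the reduction the surviving monomials still have pairwise distinct weights and that the induction on weight actually terminates — i.e. that no new top-weight monomial is created. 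The key points are that every reduction strictly lowers weight, that $\Lambda_r$ contains exactly one monomial of each admissible weight, and that the non-trivial cubic forms $G_{\sigma j}$ (the $\wp-1$ counted in the Proposition, the multiples of quadratics being harmless) admit the syzygies of \eqref{eq3}; granting these, the argument of Lemma \ref{lem3} goes through verbatim with the extra quartic bookkeeping, and flatness of the family over the locus $\{\varrho = \vartheta = 0\}$ then transfers integrality and the Gorenstein property from $\C^{(0)}$.
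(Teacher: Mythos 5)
Your ``only if'' half is essentially the paper's argument: the $R_{s'i'}$ and $R_{\sigma'j'}$ lie in $I\subseteq I(\C)$, and evaluating on $\C$ expresses them as combinations $\sum_m\varrho_{s'i'm}z_{s'i'm}$ of functions with pairwise distinct pole orders at $P$, which are linearly independent, so all coefficients vanish. The ``if'' half, however, has a genuine gap at its central step. You claim that the lifted syzygies plus Lemma \ref{lem3} give, by induction on the weight, the \emph{direct sum} $\mathbf{k}[X]_r=I_r\oplus\Lambda_r$. But Lemma \ref{lem3} and the reduction procedure only ever prove the spanning statement $\mathbf{k}[X]_r=I_r+\Lambda_r$ (indeed that lemma holds for arbitrary coefficients, with no condition on the $c_{sin},d_{\sigma jn}$); it is an upper bound on $\codim I_r$. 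The content of the hypothesis $\varrho=\vartheta=0$ is needed for the opposite inequality $I_r\cap\Lambda_r=0$, and no purely formal weight induction produces that: one must exhibit something on which $\Lambda_r$ stays independent modulo $I_r$. The paper does this geometrically: it first constructs $\C$ as the unique branch through $P=(0:\dots:0:1)$ of the transversal intersection of the $g-3$ quadrics $V(F_{n_i+2g-2,1})$ and the cubic $V(G_{4g-4,1})$, computes $y_{n_i}=t^{\l_{g-i}-1}+\cdots$ to get the contact orders and nondegeneracy, and then substitutes the coordinate functions into the lifted syzygies to obtain two linear systems over $\k[[t]]$ whose matrices have unit diagonal and positive-order off-diagonal entries, hence are invertible; this forces $F_{si}(y)=G_{\sigma j}(y)=0$ for \emph{all} indices, i.e. $I\subseteq I(\C)$. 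Only then does $I_r(\C)\cap\Lambda_r=0$ (distinct pole orders at $P$ again) combine with Lemma \ref{lem3} to give $\codim I_r(\C)=\codim I_r=\dim\Lambda_r=(2g-2)r+1-g$, whence $I=I(\C)$ and the Hilbert polynomial, degree and genus. This matrix-inversion step over $\k[[t]]$ is the heart of the proof and is absent from your outline.

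Two further points are circular or unsupported as written. Transferring ``arithmetically Gorenstein'' and integrality from $\C^{(0)}$ by openness in a flat family presupposes flatness, i.e. constancy of the Hilbert function over $\{\varrho=\vartheta=0\}$, which is precisely what is being proved; the paper avoids this entirely. Integrality comes for free from the construction ($\C$ is by definition the unique irreducible component through the smooth point $P$ of a transversal intersection), and the Gorenstein/canonical property is obtained at the end by intersecting $\C$ with $V(X_{2g-2})$ to get the Cartier divisor $(2g-2)P$ with $h^0\geq g$, which Riemann--Roch forces to be canonical. Your concern about the nonlinear quartic remainders from the cubic syzygies is legitimate bookkeeping, but it is handled inside Lemma \ref{lem3} and the decomposition preceding the theorem, not where the real difficulty lies.
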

 \begin{proof}
 We first assume that the $\frac{1}{2}(g-2)(g-3)$ quadratic forms $F_{si}$ and the $\wp$ cubic forms $G_{\sigma j}$ cut out a canonical 
 curve $\C\subset\mathbb{P}^{g-1}$. Since each $R_{s'i'}$ and $R_{\sigma'j'}$ belongs to the ideal $I$, follows that 
 $R_{s'i'}(x_{n_0},\ldots,x_{n_{g-1}})=R_{\sigma'j'}(x_{n_0},\ldots,x_{n_{g-1}})=0$ for each pair of index $s'i'$ and $\sigma'j'$. 
 We can write
  \begin{equation*}
  R_{s'i'}(x_{n_0},\ldots,x_{n_{g-1}})=\displaystyle\sum_{m=0}^{s'+2g-2}\varrho_{s'i'm}z_{s'i'm}
  \end{equation*}
  and
  \begin{equation*}
   R_{\sigma'j'}(x_{n_0},\ldots,x_{n_{g-1}})=\displaystyle\sum_{r=0}^{\sigma'+2g-2}\vartheta_{\sigma'j'r}z_{\sigma'j'r},
  \end{equation*}
 where the $z_{s'i'm}, z_{\sigma'j'r}$ are monomial expressions of weights $m$ and $r$ respectively in the projective coordinates 
 functions $x_{n_0},\ldots,x_{n_{g-1}}$, and hence $z_{s'i'm}$ has pole divisor $mP$ while $z_{\sigma'j'r}$ has pole divisor $rP$. 
 Then we conclude that $\varrho_{s'i'm}=\vartheta_{\sigma'i'r}=0$.
  
 On the opposite, let us assume that the coefficients $c_{sin}, d_{\sigma jn}$ satisfy the equations $\varrho_{s'i'm}=0$ 
 and $\vartheta_{\sigma'i'r}=0$. Since the $g-3$ quadric hypersurfaces $V(F_{n_i+2g-2,1})\subset\mathbb{P}^{g-1} (i=1,\ldots,g-3)$ 
 and the cubic hypersurface $V(G_{4g-4,1})$ intersect transversally at $P$, in an open neighborhood of $P$
 their intersection has an unique irreducible component which contains $P$, and so this component is a projective integral 
 algebraic curve, say $\C$, which is smooth at $P$ and whose tangent line is the intersection of their tangent 
 hyperplanes $V(X_{n_i}), i=0,\ldots, g-3$.

  Let $y_{n_0},\ldots,y_{n_{g-1}}$ be the projective coordinate functions of $\C$ and we look for the affine open $y_{n_{g-1}}=1$. 
  Since the local coordinate ring of $C$ at $P$ is a discrete valuation ring and $n_{g-1}-n_{g-2}=l_2-l_1=1$, we have that 
  $t:=y_{n_{g-2}}$ is a local parameter of $\C$ at $P$, and $y_{n_0},\ldots, y_{n_{g-3}}$ are the power series in $t$ of order 
  greater than $1$. More precisely, comparing coefficients in the $g-3$ equations 
  $$F_{n_i+2g-2}(y_{n_0},\ldots, y_{n_{g-2}}, y_{n_{g-1}})=0,\ \ i=1,\ldots,g-3$$ and $$G_{4g-4},1(y_{n_0},\ldots, y_{n_{g-2}}, y_{n_{g-1}})=0$$ one sees that
  \begin{eqnarray*}
  y_{n_i}=t^{n_{g-1}-n_i}+ (\mbox{sum of higher orders terms}) \\= t^{l_{g-i}-1}+(\mbox{sum of higher orders terms}),
  \end{eqnarray*}
  for each integer $i=0,\ldots,g-1$. This means that the $g$ integers $l_i-1$ $(i=1,\ldots,g)$ are the contact orders of 
  the curve $\C\subset\mathbb{P}^{g-1}$ with the hyperplanes at $P$. In particular, the curve $\C$ is not contained in any hyperplane.
  
 By assumption, $\varrho_{s'i'm}=0$ and $\vartheta_{\sigma'j'r}=0$ for each pair of double indexes 
  $s'i'$ and $\sigma'j'$ , respectively. Hence, we obtain the syzygies 
   \begin{equation*}
   X_{2g-2}F_{s'i'}+\displaystyle\sum_{nsi}^{}\epsilon_{nsi}^{(s'i')}X_nF_{si}-\displaystyle\sum_{nsi}^{}\eta_{nsi}^{(s'i')}X_nF_{si}=0
   \end{equation*}
   and 
   \begin{equation*}
X_{2g-2}G_{\sigma'j'}+\displaystyle\sum_{q\sigma j}^{}\rho_{q\sigma j}^{(\sigma'j')}X_qG_{\sigma j}-
\displaystyle\sum_{mq\sigma j}^{}\mu_{mq\sigma j}^{(\sigma'j')}X_mX_qF_{\sigma j}-\displaystyle\sum_{q\sigma j}^{}\nu_{q\sigma j}^{(\sigma'j')}X_qG_{\sigma j}=0.
\end{equation*}
Replacing the variables $X_{n_0},\ldots, X_{n_{g-1}}$ by the projective coordinates functions $y_{n_0},\ldots, y_{n_{g-1}}$ 
we get two systems: a system with $\frac{1}{2}(g-3)(g-4)$ linear homogeneous equations in the $\frac{1}{2}(g-3)(g-4)$ functions 
$F_{s'i'}(y_{n_0},\ldots, y_{n_{g-1}})$ with the coefficients in the domain $\textit{k}[[t]]$ of formal power series; 
the second system is composed by $\wp-1$ linear homogeneous equations in the $\wp-1$ functions $G_{\sigma'j'}(y_{n_0},\ldots, y_{n_{g-1}})$ 
with the coefficients in the domain $\textit{k}[[t]]$ of formal power series. Since the triple indexes $nsi$ of the 
coefficients $\epsilon_{nsi}^{(s'i')}$, respectively, $\eta_{nsi}^{(s'i')}$, satisfy the inequalities $n<2g-2$ and $n+s=2g-2+s'$, 
respectively, $n\leq 2g-2$ and $n+s<2g-2+s'$, the diagonal entries of the matrix of the system have constant terms $1$, 
while the remaining entries have positive orders. Therefore, the matrix is invertible, and so the equation 
$F_{si}(y_{n_0},\ldots, y_{n_{g-1}})=0$ holds for each double index $si$. In the second system, the indexes 
$q\sigma j, mq\sigma j$ and $n\sigma j$ of the coefficients  $\rho_{q\sigma j}^{(\sigma'j')}, \mu_{mq\sigma j}^{(\sigma'j')}$ 
and $\nu_{n\sigma j}^{(\sigma'j')}$, respectively, are such that satisfy the inequalities $q<2g-2$ and $q+\sigma=2g-2+\sigma'$, 
respectively, $m, q\leq 2g-2$ and $m+q+\sigma<2g-2+\sigma'$. So the diagonal entries of the matrix of the system have constant 
terms $1$, while the remaining entries have positive orders, hence the matrix is also invertible. This means that the equation 
$G_{\sigma j}(y_{n_0},\ldots, y_{n_{g-1}})=0$ holds for each double index $\sigma j$. Therefore, we shown that $I\subset I(\C)$,
where $I$ is the ideal generated by the $\frac{1}{2}(g-2)(g-3)$ quadratic forms $F_{si}$ and by the $\wp$ cubic forms $G_{\sigma j}$. 

By virtue of Lemma \ref{lem3}, $\codim I_r\leq\dim\Lambda_r$ for each $r\geq 2$. Since $I_r(\C)\cap\Lambda_r=0$, 
we deduce $\dim\Lambda_r\leq \codim I_r(\C)$ and we obtain
\begin{equation*}
\codim I_r(\C)=\codim I_r=\dim\Lambda_r=(2g-2)r+1-g.
\end{equation*}
Thus $I(\C)=I$ and the curve $\C\subset\mathbb{P}^{g-1}$ has Hilbert polynomial $(2g-2)r+1-g$, hence $\C$ has degree $2g-2$ and 
arithmetic genus $g$.

Intersecting the curve $\C$ with the hyperplane $V(X_{2g-2})$ we obtain the divisor $D:=(2g-2)P$ of degree $2g-2$, whose 
complete linear system $|D|$ has dimension at least $g-1$, and so by Riemann--Roch theorem for complete integral 
curves the Cartier divisor $D$ is canonical, and $\C$ is a canonical Gorenstein curve.
\end{proof}

Note that the fixed $P$-hermitian basis $x_{n_0}, x_{n_1},\ldots,x_{n_{g-1}}$ of $H^0(\C,(2g-2)P)$ is 
uniquely determined up to a linear transformation $x_{n_i}\mapsto\displaystyle\sum_{j=i}^{g-1}c_{ij}x_{n_j}$, 
with $(c_{ij})\in\GL_g(\k)$ an upper triangular matrix whose diagonal entries are of the form 
$c_{ii}=c^{n_i}, i=0,\ldots, g-1$, for some non-zero constant $c$, due the normalizations $c_{sis}=1$. 
We assume that the characteristic of the field of constants $\k$ is zero (or a prime not dividing any of the 
differences $m-n$ with $n, m$ nongaps such that $m<n\leq2g-2$). 

If the symmetric semigroup is non-odd we can normalize $\frac{1}{2}g(g-1)$ coefficients $c_{sin}$ of the quadratic forms
to be zero, for each $i=1,\dots,g-1$ we just transform $$X_{n_i}\mapsto X_{n_i}+\sum_{j=1}^{i}c_{n_in_{i-j}}X_{n_{i-j}}$$ and proceed by induction
on the weight of the coefficients, as in \cite[pg. 587]{CS}. In the odd case, we can normalize $g-3$ coefficients of the cubic
form $G_{4g-4,1}$ by transforming $$X_{2g-4}\mapsto X_{2g-4}+\sum_{i=1}^{g-3}d_{2g-4,n_{g-3-i}}X_{n_{g-3-i}},$$ and by transforming 
$$X_{n_i}\mapsto X_{n_i}+\sum_{j=1}^{i}c_{n_in_{i-j}}X_{n_{i-j}}$$ with $n_i\neq n_{g-3}=2g-4$ we can normalize the remaining
$\frac{1}{2}g(g-1)-(g-3)$ coefficients of the quadratic forms $F_{n_i+2g-2,1}$.

 Due all the normalizations the only freedom left 
 to us is to transform $x_{n_i}\mapsto c^{n_i}x_{n_i}, i=0,\ldots, g-1$ for some non-zero constant $c\in\k$. 
 Therefore, we have showed:
 
 \begin{thm}\label{teo3}
 Let $\N$ be a nonhyperelliptic and non-ordinary symmetric semigroup of genus $g\geq 5$
 The isomorphism classes of the pointed complete integral Gorenstein curves with Weierstrass semigroup $\N$ 
 correspond bijectively to the orbits of the $\mathbb{G}_m(\textit{k})$-action
 \begin{equation*}
 (c,\ldots,c_{sin},\ldots)\longmapsto(\ldots,c^{s-n}c_{sin},\ldots)
 \end{equation*}
 on the  affine quasi-cone of the vectors whose coordinates are the coefficients $c_{sin}$, $d_{\sigma j n}$ 
 of the normalized quadratic and cubic forms $F_{si}$ and $G_{\sigma j}$ satisfying the quasi-homogeneous 
 equations $\varrho_{s'i'm}=\vartheta_{\sigma'i'r}=0$. 
 \end{thm}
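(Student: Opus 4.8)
The plan is to bootstrap from the previous theorem, which already establishes that the normalized quadratic and cubic forms cut out a canonical integral Gorenstein curve with Weierstrass semigroup $\N$ at $P=(0:\ldots:0:1)$ precisely when the coefficients satisfy $\varrho_{s'i'm}=\vartheta_{\sigma'j'r}=0$. So the content of Theorem \ref{teo3} is the passage from \emph{pointed curves equipped with a chosen $P$-hermitian basis} to \emph{isomorphism classes of pointed curves}, i.e. quotienting out the residual change-of-coordinates freedom. First I would record that any isomorphism between two such pointed Gorenstein curves is induced by a linear automorphism of $\mathbb{P}^{g-1}$, since the embedding is the canonical one attached to $\omega=\mathcal{O}_{\C}((2g-2)P)$ and an isomorphism of pointed curves carries the dualizing sheaf and the point $P$ to their counterparts; hence it acts on $H^0(\C,\omega)$ and must respect the pole-order filtration at $P$. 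This forces the transition matrix $(c_{ij})$ to be upper triangular, and the requirement that the new basis again be $P$-hermitian with the normalization $c_{sis}=1$ forces the diagonal entries to be $c_{ii}=c^{n_i}$ for a single nonzero scalar $c$, exactly as stated in the paragraph preceding the theorem.

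Next I would carry out the normalization count explicitly, following the two displayed substitutions in the excerpt. In the odd case one first uses $X_{2g-4}\mapsto X_{2g-4}+\sum_{i=1}^{g-3}d_{2g-4,n_{g-3-i}}X_{n_{g-3-i}}$ to kill the $g-3$ coefficients of the cubic form $G_{4g-4,1}$ attached to the variable $X_{2g-4}=X_{n_{g-3}}$, proceeding by induction on the weight of the coefficient so that each step does not disturb the ones already normalized (a lower-weight substitution only affects coefficients of strictly higher weight, as in \cite[pg.~587]{CS}). Then one uses $X_{n_i}\mapsto X_{n_i}+\sum_{j=1}^{i}c_{n_in_{i-j}}X_{n_{i-j}}$ for $n_i\neq n_{g-3}$ to kill the remaining $\tfrac12 g(g-1)-(g-3)$ coefficients of the quadratic forms $F_{n_i+2g-2,1}$, again inducting on weight. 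The key bookkeeping point is that these two families of substitutions are ``triangular'' with respect to one another — the cubic normalization uses only the single excluded variable $X_{2g-4}$, so it is untouched by the subsequent quadratic normalizations, and jointly they exhaust all the off-diagonal freedom in $(c_{ij})$. After all normalizations the only surviving symmetry is the diagonal torus action $x_{n_i}\mapsto c^{n_i}x_{n_i}$, which on the coefficient vectors reads $c_{sin}\mapsto c^{s-n}c_{sin}$ and $d_{\sigma jn}\mapsto c^{\sigma-n}d_{\sigma jn}$ — this is the $\mathbb{G}_m(\k)$-action in the statement, and it preserves the equations $\varrho_{s'i'm}=\vartheta_{\sigma'j'r}=0$ since these are quasi-homogeneous.

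With these pieces in hand the bijection is assembled as follows. To a $\mathbb{G}_m$-orbit on the quasi-cone of normalized coefficient vectors satisfying the equations, associate the pointed curve of the previous theorem; this is well-defined on orbits because the torus action corresponds to rescaling the coordinate functions, which yields an isomorphic pointed curve. Conversely, given a pointed complete integral Gorenstein curve with Weierstrass semigroup $\N$, choose a $P$-hermitian basis, read off the (not yet normalized) coefficients $c_{sin},d_{\sigma jn}$ — which satisfy the equations by the ``only if'' direction of the previous theorem, applied after observing that the canonical ideal of the actual curve is generated by the relations \eqref{qdforms} and \eqref{cforms} — apply the normalization procedure to land in the quasi-cone, and take the resulting orbit. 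The two constructions are mutually inverse: different $P$-hermitian bases for the \emph{same} pointed curve differ by an upper-triangular matrix of the constrained form above, whose unipotent part is absorbed by the normalization (uniqueness of the normal form, which is where the characteristic hypothesis on $\k$ enters, guaranteeing the inductive linear systems for the $c_{sin}$ and $d_{\sigma jn}$ are solvable) and whose diagonal part is exactly the $\mathbb{G}_m$-factor, so the orbit is well-defined; and conversely isomorphic pointed curves produce the same orbit by the first paragraph. I expect the main obstacle to be the verification that the two normalization substitutions genuinely eliminate \emph{all} the nondiagonal freedom without over- or under-counting — i.e. that the chosen pivot coefficients (those of $G_{4g-4,1}$ attached to $X_{2g-4}$ and of the $F_{n_i+2g-2,1}$) are in triangular position with respect to the weight induction and are independent — which is a careful but finite combinatorial check on the odd semigroup $\N=\langle g,g+1,\ldots,2g-2\rangle$ rather than a conceptual difficulty.
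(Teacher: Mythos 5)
Your proposal is correct and follows essentially the same route as the paper: the paper's own argument is exactly the passage from the previous theorem's quasi-cone to isomorphism classes via the observation that a $P$-hermitian basis is unique up to an upper-triangular transformation with diagonal $c^{n_i}$, followed by the two weight-inductive normalizations (the $g-3$ coefficients of $G_{4g-4,1}$ via $X_{2g-4}$ and the remaining $\tfrac12 g(g-1)-(g-3)$ coefficients of the $F_{n_i+2g-2,1}$), leaving only the diagonal $\mathbb{G}_m$-action. Your write-up is in fact more explicit than the paper's (which compresses the mutual-inverse verification into ``Therefore, we have showed''), and the triangularity check you flag as the main obstacle is likewise left implicit in the paper.
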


The dimension of the moduli spaces $\M$ for any $\N$ is known for a few special cases. A great lower bound
was obtained by Pflueger in \cite{N16}, where the effective weight is an upper bound for codimension of
$\M$ in $\mathcal{M}_{g,1}$. On the other hand, an upper bound follows from a formula obtained by
Deligne \cite{Del73}. Both bounds are sharp but there are examples where the strict inequalities hold, see
\cite{NP16} and \cite{CS}. In the case of odd symmetric semigroups $\N=\langle g,g+1,\dots,2g-2\rangle$ Rim--Vitulli \cite{RV77}
showed that $\N$ is negatively graduated, hence Pflueger's lower bound and Deligne's upper bound
are equal to $2g-1=\dim \M=\dim\CM$.

\section{Odd numerical semigroups of genus at most six}

We start this section with the following observation on the rationality of $\CM$
for $\N$ symmetric and generated by less than five elements, which was also noted in \cite{CS}.
If the symmetric semigroup $\N$ is
generated by $4$ elements, 
using Pinkham's equivariant deformation theory \cite{Pi74},
Buchsbaum-Eisenbud's structure theorem
for Gorenstein ideals of codimension $3$ (see
\cite[p.\,466]{BE77}), one can deduce that the affine monomial curve $C^{(0)}$
can be negatively smoothed without any obstructions (see \cite{Bu80},
\cite{W79} \cite[Satz 7.1]{W80}), hence
$$
\CM = \proj(T^{1,-}_{\k[\N]|\k}). 
$$


\noindent Although the above observation assures that $\CM=\mathbb{P}^9$ for $\N:=\langle 5,6,7,8\rangle$, we
believe that it is relevant to illustrate our techniques in an example not so involved with large
computations.

 \subsection{Odd of genus five}
 
Let $\mathcal{C}^{(0)}$ be the canonical monomial Gorenstein curve of genus $5$ associated to the odd symmetric semigroup
of genus also $5$. Up to change of coordinates can we write:
$$\mathcal{C}^{(0)}:=\{(a^8\,:\,a^3b^5\,:\,a^2b^6\,:\,a^1b^7\,:\,b^8)\,\vert\,(a:b)\in\mathbb{P}^1\}\subseteq \mathbb{P}^4\,.$$
The symmetric Weierstrass semigroup of the smooth point $P=(0:0:0:0:1)$ is $\N:=\langle 5,6,7,8\rangle$. Following
Lemma \ref{lemaI0} the ideal of $\mathcal{C}^{(0)}$ can be generated by the following seven isobaric and homogeneous forms
$$ \begin{array}{ll}
 F_{12}^{(0)}=X_6^2-X_5X_7& F_{13}^{(0)}=X_6X_7-X_5X_8,\\
 F_{14}^{(0)}=X_7^2-X_6X_8& G_{15}^{(0)}=X_5^3-X_0X_7X_8,\\
 G_{16}^{(0)}=X_5^2X_6-X_0X_8^2& G_{18}^{(0)}=X_6^3-X_5^2X_8,\\
 G_{21}^{(0)}=X_7^3-X_5X_8^2.&\\
 \end{array}$$
 
\noindent For each nongap $n\in \N$ we take the rational function $x_n$ with pole divisor $nP$.
Writing each one of the seven rational functions $x_6^2, x_6x_7, x_7^2, x_5^3, x_5^2x_6, x_6^3$ and $x_7^3$ as linear 
combination of the basis elements of the vector spaces $H^0(\mathcal{C},2(2g-2))$ and $H^0(\mathcal{C},3(2g-2))$, 
respectively, we obtain in the variables $X_0, X_5, X_6, X_7, X_8,$ the polynomials 
    \begin{equation*}
         F_{i}=F_{i}^{(0)}-\displaystyle\sum_{j=1}^{i} c_{ij}Z_{i-j},\ (i=12, 13, 14),
         \end{equation*}
         and 
         \begin{equation*}
             G_{i}=G_{i}^{(0)}-\displaystyle\sum_{j=1}^{i} d_{ij}Z_{i-j},\ (i=15, 16, 18, 21),
             \end{equation*}
where $Z_{i-j}$ stands for the basis monomial of weight $i-j$, and the summation index $j$ varies 
only through the integers such that $i-j\in \N$.
        
By using the transformations $X_{i}\mapsto X_{i}+\sum_{j=1}^{i-1}\lambda_{j}X_{i-j}$, we can normalize the following ten coefficients
$$c_{12, 1}=c_{12, 2}=c_{12, 7}=c_{13, 1}=c_{13, 2}=c_{13, 3}=c_{13, 8}=d_{16, 1}=d_{16, 6}=d_{21, 5}=0\,.$$

              
By applying the Syzygy Lemma \ref{lem2} we obtain the following four syzygies of the canonical monomial curve $\mathcal{C}^{(0)}$
\begin{equation}\label{syzyg5}
  \begin{array}{l}
   X_8F^{(0)}_{12}-X_7F^{(0)}_{13}+X_6F^{(0)}_{14}=0,\\
   X_8G^{(0)}_{15}-X_5X_6F^{(0)}_{12}+X_5G^{(0)}_{18}-X_7G^{(0)}_{16}=0,\\
   X_8G^{(0)}_{18}-X_5G^{(0)}_{21}+X_5X_7F^{(0)}_{14}-X_6X_8F^{(0)}_{12}=0,\\
   X_8G^{(0)}_{21}-X_7X_8F^{(0)}_{14}+X_8^2F^{(0)}_{13}=0.\\
   \end{array}
\end{equation}

\noindent Replacing each left-hand side of the above syzygies the binomials $F_{s,i}^{(0)}, F_{s',i'}^{(0)},G_{\sigma, j}^{(0)}$ and 
$G_{\sigma', j'}^{(0)}$ by the quadratic and cubic forms $F_{s,i}, F_{s',i'}, G_{\sigma, j}$ and $G_{\sigma', j'}$, respectively, 
and applying the division algorithm recursively until all the monomials of these new equations belong to the basis $\Lambda_3$ or $\Lambda_4$, 
we get the following four polynomial equations 
   
\medskip

\begin{flushright}
 

\noindent $\begin{array}{r}
  X_8F_{12}-X_7F_{13}+X_6F_{14}=-F_{12}\left(c_{14, 3}X_{5}+c_{14, 8}X_{0}\right)+F_{14}c_{13, 6}X_{0}-G_{16}c_{14, 4}\\ +F_{13} \left(c_{13, 7}X_{0}-c_{14, 2}X_{5}
  -c_{14, 7}X_{0}\right),\\
\end{array}$
  
\medskip
  
\noindent $\begin{array}{r}
  X_{8}G_{15}-X_{6}G_{17}+X_{5}G_{18}-X_{7}G_{16}=(c_{12,6}X_{0}X_{5}-d_{18,1}X_{0}X_{8})F_{12}\\
  -(c_{14,3}d_{16,4}+c_{14,3}d_{15,3}d_{18,1}+d_{18,7})X_{0}G_{16}+(d_{16,5}X_{5}+c_{12,5}X_{5})X_{0}F_{13}\\
  +(d_{16,9}X_{0}-d_{18,1}X_{8}+d_{15,8}d_{18,1}X_{0}+d_{15,3}d_{18,1}X_{5}+d_{16,4}X_{5})X_{0}F_{14}\\
  +(d_{16,10}X_{0}+d_{15,9}d_{18,1}X_{0}+d_{15,1}d_{18,1}X_{8}+d_{15,4}d_{18,1}X_{5}+d_{16,2}X_{8})X_{0}F_{13}\\
  +(-c_{14,4}d_{16,4}X_{0}-c_{14,4}d_{15,3}d_{18,1}X_{0}-d_{18,1}X_{7}-d_{18,8}X_{0})G_{15},\\
\end{array}$

\medskip 
   
\noindent $\begin{array}{r}
 X_{8}G_{18}-X_{5}G_{21}-X_{6}X_8F_{12}+X_{7}X_{5}F_{14}=\\
(-c_{14, 3}^2d_{16, 4}-c_{14, 2}c_{14, 3}d_{15, 5}-c_{14, 3}c_{14, 4}d_{15, 3}+c_{14, 3}c_{14, 7})G_{16}X_{0}\\
+c_{14, 3}d_{15, 3}d_{14, 4}+c_{14, 2}c_{14,8}-c_{14, 2}c_{14, 4}d_{15, 4}+c_{14, 2}^2c_{14, 3}d_{15, 3})X_{0}G_{16}\\
(+c_{14, 4}d_{15, 9}X_{0}-d_{15, 1}c_{14, 2}^2X_{8}-d_{15, 4}d_{14, 4}X_{5}-c_{14, 8}X_{5}+c_{12, 5}X_{8}\\
+c_{14, 2}c_{14, 3}X_{8}+c_{14, 3}d_{16, 2}X_{8}-d_{15, 4}c_{14, 2}^2X_{5}-c_{14, 2}c_{14, 3}d_{15, 8}X_{0}\\
+c_{14, 4}d_{15, 1}X_{8}-d_{15, 1}d_{14, 4}X_{8}-d_{15, 9}d_{14, 4}X_{0}+c_{14, 3}d_{16, 10}X_{0}+c_{14, 3}d_{16, 5}X_{5}\\
+c_{14, 4}d_{15, 4}X_{5}-d_{15, 9}c_{14, 2}^2X_{0}-c_{14, 2}c_{14, 3}d_{15, 3}X_{5})X_{0}F_{13}\\
(+d_{14, 11}X_{0}+d_{14, 4}X_{7}+d_{14, 3}X_{8}-c_{14, 4}X_{7}+c_{14, 2}^2X_{7}-c_{14, 4}c_{14, 3}d_{16, 4}X_{0}\\
+c_{14, 1}c_{14, 2}X_{8}+d_{15, 3}c_{14, 2}^2c_{14, 4}X_{0}-c_{14, 4}^2d_{15, 3}X_{0}+c_{14, 4}d_{15, 3}d_{14, 4}X_{0}\\
-c_{14, 2}c_{14, 4}d_{15, 5}X_{0}+c_{14, 2}c_{14, 9}X_{0}+c_{14,4}c_{14, 7}X_{0}+c_{14, 2}c_{14, 4}X_{5}+c_{14, 2}c_{14, 3}X_{6})G_{15}\\
(c_{14, 2}^2X_{0}X_{8}-c_{14, 4}X_{0}X_{8}+d_{14, 4}X_{0}X_{8}-c_{14, 7}X_{0}X_{5}-c_{14, 2}X_{5}^2+c_{14, 3}d_{16, 9}X_{0}^2\\
+c_{14, 4}d_{15, 3}X_{0}X_{5}-d_{15, 3}d_{14, 4}X_{0}X_{5}+c_{14, 3}d_{16, 4}X_{0}X_{5}-d_{15, 8}d_{14, 4}X_{0}^2\\
+c_{14, 4}d_{15, 8}X_{0}^2-c_{14, 2}^2d_{15, 3}X_{0}X_{5}-c_{14, 2}^2d_{15, 8}X_{0}^2)F_{14}+(d_{14, 2}X_{8}-c_{14, 3}X_{7})G_{16}\\
(+c_{12, 6}X_{8}-c_{14, 2}c_{14, 3}d_{15, 9}X_{0}-c_{14, 2}c_{14, 3}d_{15, 4}X_{5}-c_{14, 2}c_{14, 3}d_{15, 1}X_{8})X_{0}F_{12},\\
 \end{array}$

 \medskip
   
 \noindent $\begin{array}{r}
 G_{21,2}X_{8}-G_{21,1}X_{8}-G_{22}X_{7}=X_8(c_{14, 3}X_{5}+c_{14, 8}X_{0})F_{13}\\
 +X_{8}\left[(c_{14, 2}X_{5}+c_{14, 7}X_{0})F_{14}-c_{14, 2}c_{14, 4}G_{15}-c_{14, 2}c_{14, 3}G_{16}\right].\\
\end{array}$

 \end{flushright}
 

  \medskip
 
\noindent We now determine the weighted vector space $T^{1,-}_{\k[\N]|\k}$, which is (up to an isomorphism) the 
locus of the linearizations of the above 4 equations, all we have to do is substituting by zero the right 
hand side of each equation. These four equations give rise to other $20$ linear equations obtained by 
replacing $X_{n_i}\mapsto t^{n_i}$. We can solve this linear system as follows:

 \medskip
\noindent $\begin{array}{l}
d_{16,10}=d_{15,10}, d_{16,9}=d_{15,9}, d_{16,8}=d_{15,8}, c_{14,7}=c_{13,7}, d_{18,7}=c_{13,7}, d_{15,7}=-c_{13,7}, \\
d_{21,7}=2c_{13,7},c_{14,6}=-c_{12,6}, d_{21,6}=-c_{12,6}, d_{18,6}=c_{12,6}, d_{16,5}=d_{15,5},\\
c_{14,4}=-c_{12,4}, d_{16,4}=d_{15,4}, d_{21,4}=-c_{12,4}, d_{18,4}=c_{12,4}, d_{16,3}=d_{15,3}, d_{16,2}=d_{15,2}.\\
\end{array}$

  \medskip
 
\noindent We can verify that the weighted vector space $T^{1,-}_{\mathbf{k[\N]}|\mathbf{k}}$ depends only on the ten coe\-fficients 
$d_{15,2}, d_{15,3}, , c_{12,4}, d_{15,4}, d_{15,5}, c_{12,6}, c_{13,7}, d_{15,8}, d_{15,9}, d_{15,10}\,$,
 which implies   $$\dim T^{1,-}_{\mathbf{k[\N]}|\mathbf{k}}=10.$$
 More precisely, counting the coefficients of weight $s$, we obtain the dimension of the graded component
 of $T^{1,-}_{\mathbf{k[\N]}|\mathbf{k}}$ of negative weight $-s$:
 
$$\begin{array}{l}
 \dim T^{1,-}_{s}=1,\ (s=-10, -9, -8, -7, -6, -5, -3,-2)\ \mbox{and}\ \dim T^{1,-}_{-4}=2.\\
  \end{array}$$
For the remainder integers the dimension of $T^{1,-}_{s}$ is zero. In particular, 
the compactified moduli space $\overline{\mathcal{M}_{5,1}^{\N}}$ can be 
realized as closed subspace of the $9$-dimensional weighted projective space 
$\mathbb{P}\left(T^{1,-}_{\mathbf{k[\N]}|\mathbf{k}}\right).$ 

Finally, we solve the four polynomial equations of the previous page to obtain the equations of the moduli 
variety $\overline{\mathcal{M}_{5,1}^{\N}}$. By replacing $X_{n_i}\mapsto t^{n_i}$
the compactified moduli space $\overline{\mathcal{M}_{5,1}^{\N}}$ is cut out by $70$ 
equations which depend on $64$ variables, we can solve them in the following way:

\begin{itemize}
 \item 18 coefficients which are identically zero, namely:
\end{itemize}
$$\begin{array}{l}
c_{12,5}=c_{13,5}=c_{13,6}=c_{14,1}=c_{14,2}=c_{14,3}=d_{15,1}=d_{16,11}=d_{18,1}=0 \\
d_{18,2}=d_{18,3}=d_{18,5}=d_{18,8}=d_{18,11}=d_{21,1}=d_{21,2}=d_{21,3}=d_{21,10}=0.\\ 
\end{array}$$

\medskip

\begin{itemize}
 \item  11 linear equations:
\end{itemize}
$$\begin{array}{llll}
 c_{14,4}=-c_{12,4}, & d_{15,7}=-c_{13,7}, & d_{16,2}=d_{15,2}, & d_{16,4}=d_{15,4}, \\
 d_{16,5}=d_{15,5}, & d_{16,9}=d_{15,9}, & d_{18,4}=c_{12,4},  & d_{18,6}=c_{12,6}, \\
 d_{18,7}=c_{13,7}, & d_{21,4}=-c_{12,4}, & d_{16,3}=d_{15,3}. & \\
\end{array}$$

\medskip

\begin{itemize}
 \item 17 quadratic polynomials and isobarics:
\end{itemize}
$$\begin{array}{lll}
c_{12,12}=-c_{12,4}d_{15,8}, & & c_{13,13}=c_{12,4}d_{15,9}, \\
c_{14,6}=-c_{12,4}d_{15,2}-c_{12,6}, & & c_{14,7}=-c_{12,4}d_{15,3}+c_{13,7}, \\
c_{14,8}=-c_{12,4}d_{15,4} & &c_{14,9}=-c_{12,4}d_{15,5},\\
c_{14,14}=-c_{12,4}d_{15,10}, & & d_{15,15}=c_{12,6}d_{15,9}+c_{13,7}d_{15,8},  \\
d_{16,8}=-c_{12,4}d_{15,4}+d_{15,8},& & d_{16,10}=-c_{12,6}d_{15,4}-c_{13,7}d_{15,3}+d_{15,10}, \\ 
d_{18,12}=-c_{12,4}d_{15,8}-c_{12,6}^{2}, & & d_{21,6}=-c_{12,4}d_{15,2}-c_{12,6}, \\
d_{18,13}=c_{12,4}d_{15,9}, & & d_{21,7}=-c_{12,4}d_{15,3}+2\,c_{13,7}, \\
d_{21,8}=-c_{12,4}d_{15,4}, & & d_{21,9}=-c_{12,4}d_{15,5},\\
 d_{18,10}=-c_{12,4}c_{12,6}. & & \\
\end{array}$$

\medskip

\begin{itemize}
 \item and the following 8:
\end{itemize}
$$ \begin{array}{l}
d_{16,16}=-c_{12,4}d_{15,3}d_{15,9}+c_{12,4}d_{15,4}d_{15,8}, \ \ \ \ d_{18,18}=c_{12,4}c_{12,6}d_{15,8},\\
d_{21,13}=-c_{12,4}^{2}d_{15,2}d_{15,3}-c_{12,4}c_{12,6}d_{15,3}+c_{12,4}c_{13,7}d_{15,2}+c_{12,4}d_{15,9}+c_{12,6}c_{13,7},\\  
d_{21,14}=-c_{12,4}^{2}d_{15,3}^{2}+2\,c_{12,4}c_{13,7}d_{15,3}-c_{12,4}d_{15,10}-c_{13,7}^{2},\\
d_{21,15}=-c_{12,4}^{2}d_{15,3}d_{15,4}+2\,c_{12,4}c_{13,7}d_{15,4}, \ \ \ \ d_{21,11}=-{c_{12,4}}^{2}d_{15,3}+c_{12,4}c_{13,7},\\
d_{21,16}=-c_{12,4}^{2}d_{15,3}d_{15,5}+c_{12,4}c_{13,7}d_{15,5},\\
d_{21,21}=-c_{12,4}^{2}d_{15,3}d_{15,10}+c_{12,4}^{2}d_{15,4}d_{15,9}+c_{12,4}c_{13,7}d_{15,10}.\\
\end{array}$$

\medskip

\noindent We note that there are 16 missing equations from the 70 annunciated, but each one of these 16 is redundant.
We also note that no one condition on the $10$ coefficients of the ambient space $T^{1,-}_{\mathbf{k[\N]}|\mathbf{k}}$ appears, which means
$$\overline{\mathcal{M}_{5,1}^{\N}}=\mathbb{P}(T^{1,-}_{\mathbf{k[\N]}|\mathbf{k}})\cong\proj^9_{\alpha},\ \mbox{ with } \alpha=(2,3,4,4,5,6,7,8,9,10).$$
  
 \subsection{Odd of genus six}
 
Let $\mathcal{C}^{(0)}$ be the canonical monomial Gorenstein curve of genus $6$ associated to the odd symmetric semigroup
$\N:=<6,7,8,9,10>$. Take $P=(0:0:0:0:0:1)$ a smooth point in $\C^{(0)}$ whose Weierstrass semigroup is $\N$. 
Applying Lemma \eqref{lemaI0}, the generators of the ideal of $\C^{(0)}$ are the following $6$ quadratic and $8$ cubic forms:
$$ \begin{array}{lll}
 F_{14}^{(0)}=X_7^2-X_6X_8& F_{15}^{(0)}=X_7X_8-X_6X_9& F_{16}^{(0)}=X_8^2-X_6X_{10},\\
  F_{16,1}^{(0)}=X_7X_9-X_6X_{10}& F_{17}^{(0)}=X_8X_9-X_7X_{10}& F_{18}^{(0)}=X_9^2-X_8X_{10},\\
G_{18}^{(0)}=X_6^3-X_0X_8X_{10}& G_{19}^{(0)}=X_6^2X_7-X_0X_9X_{10}& G_{20}^{(0)}=X_6^2X_8-X_0X_{10}^2,\\
 G_{20,1}^{(0)}=X_6X_7^2-X_0X_{10}^2& G_{21}^{(0)}=X_7^3-X_6^2X_9& G_{22}^{(0)}=X_7^2X_8-X_6^2X_{10},\\
  G_{26}^{(0)}=X_8X_9^2-X_6X_{10}^2& G_{27}^{(0)}=X_9^3-X_7X_{10}^2.\\  
 \end{array}$$

\noindent We consider a \textit{pre-deformation} of the ideal of $\C^{(0)}$ as follows:
  \begin{equation*}
     F_{i}=F_{i}^{(0)}-\displaystyle\sum_{j=1}^{i} c_{ij}Z_{i-j},\ (i=14,\ldots,18\mbox{ and } i=16,1)
     \end{equation*}
     and 
     \begin{equation*}
         G_{i}=G_{i}^{(0)}-\displaystyle\sum_{j=1}^{i} d_{ij}Z_{i-j},\ (i=18,\ldots, 22, 26, 27\mbox{ and }i=20,1).
         \end{equation*}
where $Z_{i-j}$ is a polynomial of weight $i-j$, whenever $i-j$ is a nongap of $\N$. 
By suitable transformations of the variables $X_0, X_6, X_7, X_8, X_9, X_{10}$, we are able to normalize the following $15$ coefficients:
  $$\begin{array}{l}
  c_{14,1}=c_{15,1}=c_{16,1,1}=d_{18,1}=d_{18,2}=c_{15,2}=c_{16,1,2}=c_{15,3}=0,\\
  c_{16,1,3}=c_{16,1,4}=c_{15,6}=c_{14,7}=c_{14,8}=c_{15,9}=c_{16,1,10}=0.\\
  \end{array}$$
We also consider the ten syzygies of the monomial curve $\C^{(0)}$, which are induced by the Syzygy Lemma \eqref{lem2}.
\begin{equation}\label{syzyg6}  
\begin{array}{l}
X_{10}F_{14}^{(0)}-X_8F_{16,1}^{(0)}+X_7F_{17}^{(0)}=0,\\
X_{10}F_{15}^{(0)}-X_9F_{16,1}^{(0)}+X_7F_{18}^{(0)}=0,\\
X_{10}F_{16}^{(0)}-X_{10}F_{16,1}^{(0)}-X_9F_{17}^{(0)}+X_8F_{18}^{(0)}=0,\\
X_{10}G_{18}^{(0)}-X_8G_{20}^{(0)}+X_6^2F_{16}^{(0)}=0,\\
 X_{10}G_{19}^{(0)}-X_9G_{20,1}^{(0)}+X_6X_7F_{16,1}^{(0)}=0,\\
 X_{10}G_{20}^{(0)}-X_{10}G_{20,1}^{(0)}+X_6X_{10}F_{14}^{(0)}=0,\\
X_{10}G_{21}^{(0)}-X_7X_{10}F_{14}^{(0)}-X_6X_{10}F_{15}^{(0)}=0,\\
X_{10}G_{22}^{(0)}-X_6X_{10}F_{16}^{(0)}-X_8X_{10}F_{14}^{(0)}=0,\\
X_{10}G_{26}^{(0)}-X_{10}^2F_{16,1}^{(0)}-X_9X_{10}F_{17}^{(0)}=0,\\
X_{10}G_{27}^{(0)}-X_{10}^2F_{17}^{(0)}-X_9X_{10}F_{18}^{(0)}=0.\\
\end{array}
\end{equation}

\medskip

\noindent The $10$ above syzygies of the monomial curve give rise to $10$ polynomial equations 
between the $14$ polynomials $F_{i}'s$ and $G_{j}'s$.
\begin{equation}\label{presyzy6}
\begin{array}{l}
X_{10}F_{14}-X_8F_{16,1}+X_7F_{17},\\
X_{10}F_{15}-X_9F_{16,1}+X_7F_{18},\\
X_{10}F_{16}-X_{10}F_{16,1}-X_9F_{17}+X_8F_{18},\\
X_{10}G_{18}-X_8G_{20}+X_6^2F_{16},\\
 X_{10}G_{19}-X_9G_{20,1}+X_6X_7F_{16,1},\\
 X_{10}G_{20}-X_{10}G_{20,1}+X_6X_{10}F_{14},\\
X_{10}G_{21}-X_7X_{10}F_{14}-X_6X_{10}F_{15},\\
X_{10}G_{22}-X_6X_{10}F_{16}-X_8X_{10}F_{14},\\
X_{10}G_{26}-X_{10}^2F_{16,1}-X_9X_{10}F_{17},\\
X_{10}G_{27}-X_{10}^2F_{17}-X_9X_{10}F_{18}.\\
\end{array}
\end{equation}
Again, we compute the linearization of the above ten polynomials, which is isomorphic to the weighted 
vector space $T^{1,-}_{\mathbf{k[\N]}|\mathbf{k}}$. To do this, we make the substitutions $X_i\mapsto t^i$ and
solve a homogeneous linear system with $60$ equations. We can solve it in way that the solution depends only on the $15$ coefficients:
$$\begin{array}{cccccccc}
d_{18,12}, & d_{18,11}, & c_{15,8}, & c_{16,1,9}, & c_{16,1,8},& c_{15,7}, & c_{14,6}, & d_{18,6},\\
d_{18,10}, & c_{14,5}, & d_{18,5}, & c_{14,4},& d_{18,4}, & d_{18,3}, & c_{14,2}. & \\
\end{array}$$

\noindent Therefore the compactified moduli space $\overline{\mathcal{M}_{6,1}^{\N}}$
can be realized as a closed subset of the $14$-dimensional weighted projective 
space $\mathbb{P}(T^{1,-}_{\mathbf{k[\N]}|\mathbf{k}})\cong\proj^{14}_{\alpha}$, where
$\alpha=(2,3,4,4,5,5,6,6,7,8,8,9,10,11,12)$. Since the odd symmetric semigroup $\N$ is ne\-ga\-ti\-vely graded, cf. \cite{RV77},
the moduli space $\mathcal{M}_{6,1}^{\N}$ has codimension three in $\mathcal{M}_{6,1}$, cf. \cite{N16,Del73}. Hence
$\overline{\mathcal{M}_{6,1}^{\N}}$ has dimension $11$.

\noindent Now we have to take each polynomial in \eqref{presyzy6} and make successive divisions in order that 
all its monomials belongs to the basis $\Lambda_3$ or $\Lambda_4$, it is possible by virtue of Lemma \ref{lem3}.
This procedure is completely computational and we can make it by using a suitable software on computer algebra, like
Singular or Maple. Here we do not display the resulting polynomials, just because they have a huge number of monomials.
Then, we make the substitutions $X_i\mapsto t^i$, with $i=6,7,8,9,10$, on the $10$ polynomials whose monomials
are in $\Lambda_3$ and $\Lambda_4$ and solve $188$ polynomial equations. This system can be solved by 
increasing weights whose solution depends only on the $15$ coefficients of the linearization that here we rename them:
\begin{equation*}
 \begin{array}{l}
  d_{18,i}:=b_{i}\ \ (i=3, 4, 5, 6, 10, 11, 12),\\ 
  c_{14,j}:=a_j\ \ (j=2, 4, 5, 6), \\
  c_{16,1,8}:=b_8, \ \ c_{15,7}:=a_7,\ \ c_{15,8}:=a_8,\ \ c_{16,1,9}:=a_9. 
 \end{array}
\end{equation*}
By Theorem \ref{teo3} we can conclude that the moduli space
$\overline{\mathcal{M}_{6,1}^{\N}}$ is given by the zero locus of following $5$ isobaric polynomials.


\medskip

\noindent$\begin{array}{l}
\vartheta_{15}:={{4}}a_{{5}}a_{{6}}-a_{{2}}a_{{5}}b_{{8}}+a_{{4}}a_{{5}}b_{{6}}-a_{{4}}b_{{3}}b_{{8}}+{a_{{5}}}^{3}+{a_{{5}}}^{2}b_{{5}}+a_{{4}}b_{{11}}+a_{{5}}b_{{10}}+2\,a_{{7}}b_{{8}}.\\
\end{array}$

\bigskip

\noindent $\begin{array}{l}
\vartheta_{13}:=2\,a_{{2}}a_{{5}}a_{{6}}+{a_{{4}}}^{2}a_{{5}}+{a_{{4}}}^{2}b_{{5}}+a_{{4}}a_{{5}}b_{{4}}+a_{{4}}a_{{6}}b_{{3}}+{a_{{5}}}^{2}b_{{3}}-a_{{4}}a_{{9}}+a_{{5}}a_{{8}}-a_{{5}}b_{{8}}\\
-2\,a_{{6}}a_{{7}}.\\
\end{array}$

\bigskip

\noindent $\begin{array}{l}
\vartheta_{17}:=a_{{5}}b_{{12}}-a_{{2}}{a_{{5}}}^{3}-a_{{2}}{a_{{5}}}^{2}b_{{5}}-a_{{4}}a_{{5}}b_{{8}}-a_{{4}}b_{{5}}b_{{8}}+2\,{a_{{5}}}^{2}a_{{7}}+a_{{5}}a_{{7}}b_{{5}}-a_{{5}}a_{{8}}b_{{4}}\\-a_{{5}}a_{{9}}b_{{3}}
-a_{{6}}b_{{11}}+a_{{9}}b_{{8}}.\\
\end{array}$

\bigskip

\noindent $\begin{array}{l}
\vartheta_{16}:=a_{{2}}a_{{4}}{a_{{5}}}^{2}+a_{{2}}a_{{4}}a_{{5}}b_{{5}}-a_{{2}}a_{{6}}b_{{8}}-2\,a_{{4}}a_{{5}}a_{{7}}-a_{{4}}{a_{{6}}}^{2}-a_{{4}}a_{{6}}b_{{6}}-a_{{4}}a_{{7}}b_{{5}}+{b_{{8}}}^{2}\\
+a_{{4}}a_{{8}}b_{{4}}+a_{{4}}a_{{9}}b_{{3}}-a_{{4}}b_{{4}}b_{{8}}-{a_{{5}}}^{2}a_{{6}}-a_{{5}}a_{{6}}b_{{5}}-a_{{5}}b_{{3}}b_{{8}}-a_{{4}}b_{{12}}-a_{{6}}b_{{10}}-a_{{8}}b_{{8}}.\\
\end{array}$

\medskip

\noindent $\begin{array}{l}
\vartheta_{19}:={a_{{2}}}^{2}{a_{{5}}}^{3}+{a_{{2}}}^{2}{a_{{5}}}^{2}b_{{5}}+a_{{2}}a_{{4}}{a_{{5}}}^{2}b_{{3}}+a_{{2}}a_{{4}}a_{{5}}b_{{3}}b_{{5}}-4\,a_{{2}}{a_{{5}}}^{2}a_{{7}}-3\,a_{{2}}a_{{5}}a_{{7}}b_{{5}}\\
+b_{{8}}b_{{11}}+a_{{2}}a_{{5}}a_{{8}}b_{{4}}+a_{{2}}a_{{5}}a_{{9}}b_{{3}}+{a_{{4}}}^{2}a_{{5}}a_{{6}}+{a_{{4}}}^{2}a_{{5}}b_{{6}}+{a_{{4}}}^{2}a_{{6}}b_{{5}}+{a_{{4}}}^{2}b_{{5}}b_{{6}}+a_{{4}}{a_{{5}}}^{3}\\
-a_{{9}}b_{{10}}+2\,a_{{4}}{a_{{5}}}^{2}b_{{5}}-2\,a_{{4}}a_{{5}}a_{{7}}b_{{3}}+a_{{4}}a_{{5}}{b_{{5}}}^{2}-a_{{4}}a_{{7}}b_{{3}}b_{{5}}+a_{{4}}a_{{8}}b_{{3}}b_{{4}}+a_{{4}}a_{{9}}{b_{{3}}}^{2}\\
-a_{{2}}a_{{5}}b_{{12}}-a_{{2}}a_{{6}}b_{{11}}+a_{{4}}a_{{5}}b_{{10}}-a_{{4}}a_{{6}}a_{{9}}-a_{{4}}a_{{9}}b_{{6}}-a_{{4}}b_{{3}}b_{{12}}-a_{{4}}b_{{4}}b_{{11}}+a_{{4}}b_{{5}}b_{{10}}\\
-{a_{{5}}}^{2}a_{{9}}+4\,a_{{5}}{a_{{7}}}^{2}-a_{{5}}a_{{9}}b_{{5}}-a_{{5}}b_{{3}}b_{{11}}+2\,{a_{{7}}}^{2}b_{{5}}-2\,a_{{7}}a_{{8}}b_{{4}}-2\,a_{{7}}a_{{9}}b_{{3}}+2\,a_{{7}}b_{{12}}\\
-a_{{8}}b_{{11}}.\\
\end{array}$

\bigskip

By intersecting $\overline{\mathcal{M}_{6,1}^{\N}}$ with the open affine chart $\{a_5=1\}$ of $\mathbb{P}^{15}$, we see that $\overline{\mathcal{M}_{6,1}^{\N}}$
admits the following local parametrization

\medskip

\noindent $\begin{array}{l}
b_{10}=a_{{4}}b_{{3}}b_{{8}}+a_{{2}}b_{{8}}-a_{{4}}a_{{6}}-a_{{4}}b_{{6}}-a_{
{4}}b_{{11}}-2\,a_{{7}}b_{{8}}-b_{{5}}-1\\
b_{12}=a_{{4}}b_{{5}}b_{{8}}+a_{{2}}b_{{5}}+a_{{4}}b_{{8}}+a_{{6}}b_{{11}}-a_
{{7}}b_{{5}}+a_{{8}}b_{{4}}+a_{{9}}b_{{3}}-a_{{9}}b_{{8}}+a_{{2}}-2\,a
_{{7}}\\
b_8={a_{{4}}}^{2}b_{{5}}+a_{{4}}a_{{6}}b_{{3}}+2\,a_{{2}}a_{{6}}+{a_{{4}}}
^{2}-a_{{4}}a_{{9}}+a_{{4}}b_{{4}}-2\,a_{{6}}a_{{7}}+a_{{8}}+b_{{3}}.
\end{array}$

\medskip

\noindent Since $\mathcal{M}_{6,1}^{\N}$ is irreducible \cite[Thm 1.1]{Bu13}, the moduli variety $\mathcal{M}_{6,1}^{\N}$ is rational of dimension $11$.
We also note that Bullock \cite[Thm. 1]{Bu14} proved that the moduli spaces $\mathcal{M}_{g,1}^{\N}$ are stably rationals when $2\leq g\leq 6$,
with the possible exceptions $<6,7,8,9,10>$ and $<5,7,8,9,11>$, the last one is not subcanonical.

For a given monomial curve $\C$ associated to a semigroup $\N$, its obstruction space lies in the
second cohomological module of cotangent complex $T^2:=T^2(\k[\N]|\k)$. As noted in the beginning of the last section of this work,
if $\N$ is symmetric and generated by less than five elements, the monomial curve $\C$ can be smoothed without
any obstructions, which implies that $\CM$ is the weighted projective space $\proj(T^{1,-}(\k[\N]|\k))$.  
The obstructions spaces of the two examples of this section are nonzero. To see this,
we use the description of $T^2$ given by Buschweitz in \cite[Thm 2.3.1]{Bu80}, and we can conclude that for genus
five, $\N=<5,6,7,8>$, the homogeneous graded part of degree $-9$ of $T^2$ has dimension $1$, for genus $6$, $\N=<6,7,8,9,10>$,
the homogeneous graded part of degree $-13$ has dimension $1$, and in both cases $T^1$ and $T^2$ are negatively graded.   

\end{document}